\newtheorem{prethm}{{\bf Theorem}}
\newenvironment{thm}{\begin{prethm}{\hspace{-0.5
               em}{\bf.}}}{\end{prethm}}
\newtheorem{prepro}[prethm]{{\bf Theorem}}
\newtheorem{preprop}[prethm]{{\bf Proposition}}
\newenvironment{prop}{\begin{preprop}{\hspace{-0.5
               em}{\bf.}}}{\end{preprop}}
\newtheorem{precor}[prethm]{{\bf Corollary}}
\newenvironment{cor}{\begin{precor}{\hspace{-0.5
               em}{\bf.}}}{\end{precor}}
\newtheorem{predefn}[prethm]{{\bf Definition}}
\newtheorem{preconj}[prethm]{{\bf Conjecture}}
\newtheorem{preremark}[prethm]{{\bf Remark}}
\newenvironment{remark}{\begin{preremark}\rm{\hspace{-0.5
               em}{\bf.}}}{\end{preremark}}
\newtheorem{preexample}[prethm]{{\bf Example}}
\newtheorem{prelem}[prethm]{{\bf Lemma}}
\newenvironment{lem}{\begin{prelem}{\hspace{-0.5
               em}{\bf.}}}{\end{prelem}}
\newtheorem{prelam}{{\bf Lemma}}
\newtheorem{preproof}{{\bf Proof.}}
\newenvironment{proof}[1]{\begin{preproof}{\rm
               #1}\hfill{$\Box$}}{\end{preproof}}
\title{\bf \large  The Center and Radius of the Regular Graph of Ideals %\\ of  Commutative Artinian Rings
\thanks
{{\it Key Words}: Arc, Artinian ring, Distance, Eccentricity, Radius, Regular digraph}
\thanks {2010{ \it Mathematics Subject Classification}: 05C20, 05C69, 13E05, 16P20.}}
\author{{\normalsize {\sc F. Shaveisi}%\thanks{Corresponding Author}${}^{\mathsf{a,b}}$
}\vspace{3mm}\\
%{\footnotesize{\it School of Mathematics, Institute for Research in Fundamental Sciences, \rm{(IPM)} }}\\%, Kermanshah, Iran }}\\%${}^{\mathsf{a}}$
{\footnotesize{\it Department of Mathematics, Faculty of Sciences, Razi University, Kermanshah, Iran}}\\
%{\footnotesize{}}\\
%{\footnotesize{}}\\
%{\footnotesize{$\mathsf{nikmehr@kntu.ac.ir}$\quad\quad}}
{\footnotesize{ $\mathsf{f.shaveisi@razi.ac.ir}$}}}
\date{}
\begin{document}
\maketitle
\vspace{-.4cm}
\begin{abstract}
{\small \noindent The regular graph of ideals of the commutative ring $R$, denoted by ${\Gamma_{reg}}(R)$, is a graph whose vertex
set is the set of all non-trivial ideals of $R$ and two distinct vertices $I$ and $J$ are adjacent if and only if either $I$ contains a $J$-regular element or $J$ contains an $I$-regular element. In this paper,  it is proved that the radius of $\Gamma_{reg}(R)$ equals $3$. The central vertices of $\Gamma_{reg}(R)$ are determined, too.}
\end{abstract}

%%%%%%%%%%%%%%%%%%%%%%%%%%%%%%%%%%%%%%%%%%%%%%%%%%%%%%%%%%%%%%%%%%%%%%%%%%%%%%%%%%%%%%%%%%%%%%%%%%%%%%%%%%%%%%%%%%%%%%%%%%%%%%%%%%%%%%%%%%%%%%%%%%%%%%%%
\vspace{5mm} \noindent{\bf\large 1. Introduction} \vspace{5mm}\\
We begin with recalling some definitions and notations on graphs. Let $G$ be a simple graph. The {\textit{distance}} between
two vertices $x$ and $y$ of $G$ is denoted by $d(x,y)$. A graph is said to be {\textit{connected}} if there exists a path between any two distinct vertices.
The {\textit{diameter}} of a connected graph $G$,
denoted by $diam(G)$, is the maximum distance between any pair of vertices of $G$.
For any vertex $x$ of a connected graph $G$, the {\it eccentricity} of
$x$, denoted by $e(x)$, is the maximum of the distances from $x$ to the other vertices of $G$.
The set of vertices with minimal eccentricity is called the {\textit{center}} of the graph, and
this minimum eccentricity value is the {\it radius} of $G$. Let $\Gamma$ be a digraph. An arc from a vertex $x$ to another vertex $y$ of $\Gamma$ is denoted by
$x\longrightarrow y$. Also we distinguish the {\it{out-degree}} $d_\Gamma^+(v)$, the number of edges leaving the vertex $v$, and the {\it{in-degree}} $d_\Gamma^-(v)$, the number of edges entering the vertex $v$. For more details about the standard
terminology of graphs, see \cite{bondy}.

Unless otherwise stated, throughout this
paper, all rings are assumed to be commutative Artinian rings with identity.
We
denote by ${Max}(R)$ and ${Nil}(R)$, the set of all maximal ideals and the set of all nilpotent
elements of $R$, respectively. The
ring $R$ is said to be \textit{reduced} if ${Nil}(R)=(0)$.
Also, the set of all zero-divisors of an $R$-module $M$
is denoted by $Z(M)$.
An element $r$ in the ring $R$ is called
\textit{$M$-regular} if $r\notin Z(M)$.  For every ideal $I$ of $R$, the {\it annihilator} of $I$ is denoted by $Ann(I)$.

As we know, most properties of a ring are closely tied to the
behavior of its ideals, so it is useful to study graphs or
digraphs, associated to the ideals of a ring. To see an instance of these
graphs, the reader is referred to \cite{disc.math,rocky,khashayarmanesh,MBI,intersection,actamathhungar,ShaveisiNikandish}.  The
{\it regular digraph of ideals} of a ring $R$, denoted by
$\overrightarrow{\Gamma_{reg}}(R)$, is a digraph whose vertex set
is the set of all non-trivial ideals of $R$ and for every two
distinct vertices $I$ and $J$, there is an arc from $I$ to  $J$ if
and only if $I$ contains a $J$-regular element. The underlying graph of $\overrightarrow{\Gamma_{reg}}(R)$, denoted by $\Gamma_{reg}(R)$, is called the {\it regular
graph} of ideals of $R$. For more information about this graph, see  \cite{khashayarmanesh,actamathhungar}. The main aim of this paper is to prove
that $r(\Gamma_{reg}(R))=3$.
\vspace{5mm} \noindent{\bf\large 2. Preliminary Results and Notation}

In this section, the distance between any pair of vertices of $\Gamma_{reg}(R)$ is determined, when $R$ is an Artinian non-reduced
ring and $\Gamma_{reg}(R)$ is a connected graph.
%, $R$ contains a field as its direct summand and $|{\rm Max}(R)|\geq 3$.
\begin{remark}\label{transitivity}
Let $I$, $J$ and $K$ be three distinct vertices of $\overrightarrow{\Gamma_{reg}}(R)$ and $I\longrightarrow J\longrightarrow K$ be a directed path in $\overrightarrow{\Gamma_{reg}}(R)$. Then by using the definition,
one can show that there is an arc from $I$ to $K$ in $\overrightarrow{\Gamma_{reg}}(R)$.
%Choose two elements $x\in I$ and $y\in J$ such that $x$ is a $J$-regular element and $y$ is a $K$-regular element. We show that $x$ is
%a $K$-regular element. Suppose to the contrary, there exists a non-zero element $z\in K$  such that $xz=0$. This yields that $x(yz)=0$ and so $x$
%is not $J$-regular, a contradiction. Thus there is an arc from $I$ to $K$ in $\overrightarrow{\Gamma_{reg}}(R)$.
\end{remark}

The following notations are used all over this paper.

\noindent{\bf{Notation.}}
Let $I$
 and $K$ be (not necessarily distinct) ideals of $R$. We denote by
 $\mathcal{C}^-(I,K)$, the set of all non-trivial ideals $J$ of $R$ such that $J$ contains an $I$-regular element and $J$ contains a $K$-regular element. Also, we denote by
 $\mathcal{C}^+(I,K)$, the set of all non-trivial ideals $J$ of $R$ such that $I$ contains a $J$-regular element and $K$ contains a $J$-regular element. For simplicity, we denote $\mathcal{C}^-(I,(0))$ and $\mathcal{C}^+(I,R)$ by $\mathcal{C}^-(I)$ and $\mathcal{C}^+(I)$, respectively.

%Let $I$, $J$ and $K$ be three ideals of $R$. If the path $I\longleftarrow J\longrightarrow K$ ($I\longrightarrow J\longleftarrow K$) in
%$\overrightarrow{\Gamma_{reg}}(R)$ exists, then $J\in\mathcal{C}^-(I,K)$ ($J\in\mathcal{C}^+(I,K)$).
% But, the converse is not true. For example, if $R=F_1\times F_2$, where each $F_1$ and $F_2$ are fields, then
%$F_1\times (0)\in \mathcal{C}^+(R,F_1\times (0))$, whereas $\overrightarrow{\Gamma_{reg}}(R)$ contains no arc.

%If $R$ is an Artinian local ring and $I$ is a non-trivial ideal of $R$, then \cite[Theorem 2.1]{actamathhungar} implies that $I$ is an isolated vertex
%of $\Gamma_{reg}(R)$. Moreover, it is clear that $\mathcal{C}^+(I)\cup \mathcal{C}^-(I)=\varnothing$.

\begin{remark}\label{productadjacency}
Let $R_1,\ldots,R_n$ be rings, $R\cong R_1\times \cdots\times R_n$ and $I\cong I_1\times \cdots\times I_n$ and $J\cong J_1\times \cdots\times J_n$ be two distinct vertices of $\overrightarrow{\Gamma_{reg}}(R)$. Then
there is an arc from $I$ to $J$ if and only if $I_i\in\mathcal{C}^-_R(J_i)\cup \{R_i\}$, for every $i$, if and only if $J_i\in\mathcal{C}^+_R(I_i)\cup \{(0)\}$, for every $i$.
\end{remark}

Assume that $R$ is an Artinian ring such that $\Gamma_{reg}(R)$ is a connected  graph. Then by \cite[Theorem 2.3]{actamathhungar}, $|{\rm Max}(R)|\geq 3$ and $R$ contains a field as its
direct summand. So, \cite[Theorem 8.7]{ati} implies that $R\cong F_1\times R_2\times R_3$, where $F_1$ is a
field, $R_2$ is an Artinian local ring and $R_3$ is an Artinian ring. Moreover, if $R$ is non-reduced,
then we also can suppose that $R_3$ is not a field. Thus, any vertex of $\Gamma_{reg}(R)$ belongs to one of the following subsets:
\begin{enumerate}
\item[] $\mathfrak{A}=\{\mathfrak{a}=F_1\times R_2\times (0), \mathfrak{b}=F_1\times (0)\times R_3, \mathfrak{d}=(0)\times R_2\times R_3,\mathfrak{u}=(0)\times R_2\times (0), \mathfrak{v}=(0)\times (0)\times R_3\}$;
\item[] $\mathfrak{C}=\{\mathfrak{c}=F_1\times I_2\times I_3|\ I_2\times I_3\neq R_2\times R_3\}\setminus\{\mathfrak{a}, \mathfrak{b}\}$;
\item[] $\mathfrak{W}=\{\mathfrak{w}=(0)\times K_2\times K_3| \ K_2\times K_3\neq (0)\}\setminus\{\mathfrak{d},\mathfrak{u}, \mathfrak{v}\}$,
\end{enumerate}
where $I_2$ and $K_2$ are ideals of $R_2$ and $I_3$ and $K_3$ are ideals of $R_3$. From now, we use the above notations, to determine the distance between any disjoint pair of vertices of $\Gamma_{reg}(R)$. Also, for every Artinian ring $R$, we denote by $n_F(R)$, the number of fields appeared in the decomposition of $R$ to a direct product of Artinian local rings.
\begin{prop}\label{distancesimple}
Let $R$ be an  Artinian non-reduced ring. If  $\Gamma_{reg}(R)$ is a connected graph, then the following statements hold:
\begin{enumerate}
\item[\rm(1)] $d(\mathfrak{a}, \mathfrak{b})=d(\mathfrak{u},\mathfrak{v})=d(\mathfrak{a},
\mathfrak{d})=d(\mathfrak{b},\mathfrak{d})=2$

\item[\rm(2)] $d(\mathfrak{a},\mathfrak{u})=d(\mathfrak{b},\mathfrak{v})=d(\mathfrak{d},\mathfrak{u})=
    d(\mathfrak{d},\mathfrak{v})=d(\mathfrak{d},\mathfrak{w})=1$, where $\mathfrak{w}\in\mathfrak{W}$.

\item[\rm(3)] $d(\mathfrak{a},\mathfrak{v})=d(\mathfrak{b},\mathfrak{u})=3$.

\item[\rm(4)] For every vertex $\mathfrak{c}=F_1\times I_2\times I_3\in\mathfrak{C}$, $d(\mathfrak{a},\mathfrak{c})\leq 2$. Moreover, the equality holds if and only if $I_2\neq R_2$ and $I_3\neq(0)$.

\item[\rm(5)] For every vertex $\mathfrak{c}=F_1\times I_2\times I_3\in\mathfrak{C}$, $d(\mathfrak{b},\mathfrak{c})\leq 2$. Moreover, the equality holds if and only if $I_2\neq (0)$ and $I_3\neq R_3$.

\item[\rm(6)] For every vertex $\mathfrak{w}=(0)\times K_2\times K_3\in\mathfrak{W}$, $d(\mathfrak{u},\mathfrak{w})\leq 2$. Moreover, the equality holds if and only if $K_2\neq R_2$ and $K_3\neq (0)$.

\item[\rm(7)] For every vertex $\mathfrak{w}=(0)\times K_2\times K_3\in\mathfrak{W}$, $d(\mathfrak{v},\mathfrak{w})\leq 2$. Moreover, the equality holds if and only if $K_2\neq (0)$ and $K_3\neq R_3$.
\end{enumerate}
\end{prop}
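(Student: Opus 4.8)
The plan is to reduce every distance computation to a single componentwise \emph{adjacency rule} and then dispatch the seven items by exhibiting explicit short walks together with short non-adjacency checks. First I would record the working criterion coming from Remark~\ref{productadjacency}: writing $I=I_1\times I_2\times I_3$ and $J=J_1\times J_2\times J_3$, there is an arc $I\longrightarrow J$ if and only if, for every $i$, the coordinate $I_i$ contains a $J_i$-regular element of $R_i$. The key simplification is that each $R_i$ is Artinian, so a non-zero-divisor is a unit; hence a proper ideal contains no $R_i$-regular element, while $0$ is $J_i$-regular iff $J_i=(0)$, and $1$ is $J_i$-regular for every $J_i$. In every pair occurring below one of the two coordinates is $(0)$ or the whole ring, so the criterion collapses to: no constraint in coordinate $i$ when $J_i=(0)$; the source coordinate must contain a unit (i.e.\ equal the whole ring $R_i$, resp.\ $F_1$) when $J_i$ is the whole ring; and the source coordinate cannot be $(0)$ when $J_i\neq(0)$.

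Using this rule I would settle (1)--(3). For (2) I exhibit the arcs $\mathfrak a\longrightarrow\mathfrak u$, $\mathfrak b\longrightarrow\mathfrak v$, $\mathfrak d\longrightarrow\mathfrak u$, $\mathfrak d\longrightarrow\mathfrak v$ and $\mathfrak d\longrightarrow\mathfrak w$, each immediate from the rule (for instance $(0,1,0)\in\mathfrak a$ is $\mathfrak u$-regular, so $\mathfrak a\longrightarrow\mathfrak u$). For (1) I first check that each listed pair is non-adjacent in both directions, giving $d\geq2$, and then produce a common neighbour: $(F_1,0,0)$ for $\{\mathfrak a,\mathfrak b\}$ (via $\mathfrak a\longrightarrow(F_1,0,0)\longleftarrow\mathfrak b$), $\mathfrak d$ for $\{\mathfrak u,\mathfrak v\}$, $\mathfrak v$ for $\{\mathfrak b,\mathfrak d\}$, and similarly for $\{\mathfrak a,\mathfrak d\}$. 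The substance of (3) is the bound $d\geq3$: I would characterise \emph{all} vertices adjacent to $\mathfrak a$ (those $X$ with $X_3=(0)$, or with $X_1=F_1$ and $X_2=R_2$) and all vertices adjacent to $\mathfrak v$ (those with $X_1=X_2=(0)$, or with $X_3=R_3$), and note these families meet only in the excluded trivial ideals $(0)$ and $R$, so $\mathfrak a,\mathfrak v$ share no neighbour. A path such as $\mathfrak a-\mathfrak u-\mathfrak d-\mathfrak v$ gives equality; the pair $\{\mathfrak b,\mathfrak u\}$ is symmetric via $\mathfrak b-\mathfrak v-\mathfrak d-\mathfrak u$.

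For (4)--(7) the arguments are parallel, so I would write out (4) fully and indicate that (5)--(7) follow by the same bookkeeping (interchanging the ``field coordinate'' and the ``zero coordinate'', and $R_2$ with $R_3$). For $\mathfrak c=F_1\times I_2\times I_3$ the rule gives $\mathfrak a\longrightarrow\mathfrak c$ iff $I_3=(0)$, and $\mathfrak c\longrightarrow\mathfrak a$ iff $I_2=R_2$; hence $\mathfrak a,\mathfrak c$ are adjacent exactly when $I_3=(0)$ or $I_2=R_2$, and otherwise $d(\mathfrak a,\mathfrak c)\geq2$. In all cases $(F_1,0,0)$ is a common neighbour (since $\mathfrak a\longrightarrow(F_1,0,0)$ and $\mathfrak c\longrightarrow(F_1,0,0)$ always), a genuine non-trivial vertex distinct from both endpoints whenever $\mathfrak c\neq(F_1,0,0)$; this yields $d(\mathfrak a,\mathfrak c)\leq2$, with equality precisely when $I_2\neq R_2$ and $I_3\neq(0)$. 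Items (6),(7) are identical with $\mathfrak u,\mathfrak v,\mathfrak w$ replacing $\mathfrak a,\mathfrak b,\mathfrak c$ and with $\mathfrak d$ as the common neighbour instead of $(F_1,0,0)$.

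The main obstacle is not conceptual but the orientation bookkeeping: adjacency in $\Gamma_{reg}(R)$ means an arc in \emph{some} direction, and the source ideal is the one required to contain the regular element, so the rule must be applied with the right coordinate roles in each case. I must also verify in every sub-case that the proposed intermediate vertex---especially $(F_1,0,0)$---is a non-trivial ideal genuinely distinct from both endpoints, so that each exhibited walk is an honest path. The only genuinely delicate step is the exhaustive ``no common neighbour'' verification in (3); everything else is a direct check against the criterion of the first paragraph.
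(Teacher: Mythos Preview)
Your proposal is correct and follows essentially the same approach as the paper. The paper's own proof is far terser---it dismisses (1)--(3) as ``clear and follow from the definition'' and writes out only (4), using exactly your common neighbour $F_1\times(0)\times(0)$ and the same adjacency criterion; your explicit neighbour-characterisation for (3) simply fills in what the paper leaves to the reader.
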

\begin{proof}
{The assertions (1), (2) and (3) are clear and follow from the definition.
%This follows from the following trivial facts:
%
%Fact 1. The vertices $\mathfrak{a}$ and $\mathfrak{b}$ are not adjacent and $F_1\times (0)\times (0)$ is adjacent to both of them.
%
%Fact 2. The vertices $\mathfrak{u}$ and $\mathfrak{v}$ are not adjacent and $\mathfrak{d}$  is adjacent to both of them.
%
%Fact 3. The vertices $\mathfrak{a}$ and $\mathfrak{d}$ are not adjacent and $\mathfrak{u}$ is adjacent to both of them.
%
%Fact 4. The vertices $\mathfrak{b}$ and $\mathfrak{d}$ are not adjacent and $\mathfrak{v}$ is adjacent to both of them.\\
%(2) This is clear.\\
%(3) The path $\mathfrak{a}\longrightarrow F_1\times (0)\times (0)\longleftarrow \mathfrak{b}\longrightarrow \mathfrak{v}$ shows that
%$d(\mathfrak{a},\mathfrak{v})\leq 3$. Clearly, $\mathfrak{a}$ and $\mathfrak{v}$ are not adjacent. We prove that $d(\mathfrak{a},\mathfrak{v})=3$.
%Suppose to the contrary, $d(\mathfrak{a},\mathfrak{v})=2$. Then by Remark \ref{transitivity}, there exists an ideal $J=J_1\times J_2\times J_3$
%such that one of the paths $\mathfrak{a}\longrightarrow J\longleftarrow \mathfrak{v}$ and $\mathfrak{a}\longleftarrow J\longrightarrow \mathfrak{v}$
%exists. So, Remark \ref{productadjacency} implies that either $J=(0)$ or $J=F_1\times R_2\times R_3$, a contradiction.
%Therefore, $d(\mathfrak{a},\mathfrak{v})=3$. By using a similar proof, we obtain that
%    $d(\mathfrak{b},\mathfrak{u})=3$.\\
Choose $\mathfrak{c}=F_1\times I_2\times I_3\in \mathfrak{C}$. Clearly, $\mathfrak{a}$ and $\mathfrak{c}$ are adjacent if and only if either $I_3=(0)$ or $I_2=R_2$. So, we can suppose that $I_3\neq (0)$ and $I_2\neq R_2$. Since both of vertices $\mathfrak{a}$ and $\mathfrak{c}$ are adjacent to $F_1\times (0)\times (0)$, we deduce that
    $$d(\mathfrak{a},\mathfrak{c})=
  \begin{cases}
   1;       & {\rm either} \ I_3=(0)\ {\rm or}\ I_2=R_2 \\
   2;       & I_3\neq (0) \ {\rm and}\ I_2\neq R_2.
  \end{cases}$$
  So, (4) follows. Also, the proofs of (5), (6) and (7) are similar to that of (4).
% The proofs of (5), (6) and (7) are similar to that of (4).
}
\end{proof}

\begin{prop}\label{distanceccww}
Let $R$ be an  Artinian non-reduced ring. If  $\Gamma_{reg}(R)$ is a connected graph, then the following statements hold:
\begin{enumerate}
\item[\rm(1)] For every two distinct vertices $\mathfrak{c}=F_1\times I_2\times I_3$ and $\mathfrak{c}'=F_1\times J_2\times J_3$ in $\mathfrak{C}$, we have $d(\mathfrak{c},\mathfrak{c}')\leq 2$. Moreover, $d(\mathfrak{c},\mathfrak{c}')=1$ if and only if $I_2=I_3=(0)$, $J_2=J_3=(0)$ or $I_2\times I_3$ and $J_2\times J_3$ are adjacent in $\Gamma_{reg}(R_2\times R_3)$.

\item[\rm(2)] For every two distinct vertices $\mathfrak{w}=(0)\times K_2\times K_3$ and $\mathfrak{w}'=(0)\times L_2\times L_3$ in $\mathfrak{W}$, we have $d(\mathfrak{w},\mathfrak{w}')\leq 2$. Moreover,  $d(\mathfrak{w},\mathfrak{w}')=1$ if and only if $K_2\times K_3=R_2\times R_3$, $L_2\times L_3=R_2\times R_3$ or $K_2\times K_3$ and $L_2\times L_3$ are adjacent in $\Gamma_{reg}(R_2\times R_3)$.
\end{enumerate}
\end{prop}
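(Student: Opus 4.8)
The plan is to reduce every adjacency question inside $\mathfrak{C}$ (resp. $\mathfrak{W}$) to the corresponding question in the smaller ring $S:=R_2\times R_3$, and to exhibit in each family a single vertex that is adjacent to all of the others, which immediately yields the bound $d\leq 2$. The reduction rests on the following principle. If $X=F_1\times I_2\times I_3$ and $Y=F_1\times J_2\times J_3$ have full first coordinate, then an element $(x_1,x_2,x_3)\in X$ is $Y$-regular exactly when $x_1\neq 0$ and $(x_2,x_3)$ is $(J_2\times J_3)$-regular in $S$; since we may always take $x_1=1$, the first coordinate imposes no constraint, so by Remark \ref{productadjacency} there is an arc $X\longrightarrow Y$ in $\overrightarrow{\Gamma_{reg}}(R)$ if and only if $I_2\times I_3$ contains a $(J_2\times J_3)$-regular element, i.e. if and only if $I_2\times I_3\longrightarrow J_2\times J_3$ in $\overrightarrow{\Gamma_{reg}}(S)$, provided both $I_2\times I_3$ and $J_2\times J_3$ are non-trivial ideals of $S$. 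Dually, for $X=(0)\times K_2\times K_3$ and $Y=(0)\times L_2\times L_3$ the first coordinate is the zero module, on which every element is regular, so the first coordinate is again free and there is an arc $X\longrightarrow Y$ iff $K_2\times K_3\longrightarrow L_2\times L_3$ in $\overrightarrow{\Gamma_{reg}}(S)$.

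For the upper bound in (1) I would single out $\mathfrak{c}_0=F_1\times (0)\times (0)$, which lies in $\mathfrak{C}$ and, viewed as a module, is isomorphic to $F_1$; hence an element is $\mathfrak{c}_0$-regular exactly when its first coordinate is nonzero. Every $\mathfrak{c}'\in\mathfrak{C}$ contains $(1,0,0)$, which is $\mathfrak{c}_0$-regular, so there is an arc $\mathfrak{c}'\longrightarrow\mathfrak{c}_0$ and $\mathfrak{c}_0$ is adjacent to every other vertex of $\mathfrak{C}$. Thus for distinct $\mathfrak{c},\mathfrak{c}'\in\mathfrak{C}$ the path $\mathfrak{c}$--$\mathfrak{c}_0$--$\mathfrak{c}'$ (or a direct edge if one of them is $\mathfrak{c}_0$) gives $d(\mathfrak{c},\mathfrak{c}')\leq 2$. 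For (2) the dual role is played by $\mathfrak{d}=(0)\times R_2\times R_3$, which is a vertex of $\Gamma_{reg}(R)$ lying outside $\mathfrak{W}$: the element $(0,1,1)\in\mathfrak{d}$ is $\mathfrak{w}$-regular for every $\mathfrak{w}\in\mathfrak{W}$ (as $1$ is both $K_2$- and $K_3$-regular), so $\mathfrak{d}$ is adjacent to every vertex of $\mathfrak{W}$ and the same two-step argument yields $d(\mathfrak{w},\mathfrak{w}')\leq 2$.

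It remains to decide when the distance equals $1$, and here the reduction principle does the work. For (1): if $I_2=I_3=(0)$ then $\mathfrak{c}=\mathfrak{c}_0$ is adjacent to $\mathfrak{c}'$, and symmetrically if $J_2=J_3=(0)$; in the remaining case both $I_2\times I_3$ and $J_2\times J_3$ are genuine (distinct, non-trivial) vertices of $\Gamma_{reg}(S)$, and applying the reduction principle to both arc directions shows that $\mathfrak{c}$ and $\mathfrak{c}'$ are adjacent in $\Gamma_{reg}(R)$ iff $I_2\times I_3$ and $J_2\times J_3$ are adjacent in $\Gamma_{reg}(S)$, which is the stated equivalence. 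For (2), since $\mathfrak{w},\mathfrak{w}'\in\mathfrak{W}$ the ideals $K_2\times K_3$ and $L_2\times L_3$ are automatically distinct and non-trivial (the clauses $K_2\times K_3=R_2\times R_3$ and $L_2\times L_3=R_2\times R_3$ are excluded by the definition of $\mathfrak{W}$ and are therefore vacuous), so the reduction principle directly gives $d(\mathfrak{w},\mathfrak{w}')=1$ iff $K_2\times K_3$ and $L_2\times L_3$ are adjacent in $\Gamma_{reg}(S)$.

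I expect the main obstacle to be the careful bookkeeping of the module-regularity definition in the first coordinate, in particular verifying that the clean reduction to $\overrightarrow{\Gamma_{reg}}(S)$ survives the boundary cases in which a coordinate ideal is $(0)$ or is the whole ring, so that it is not itself a vertex of $\Gamma_{reg}(S)$. These are precisely the cases that must be peeled off and handled by the dominating vertices $\mathfrak{c}_0$ and $\mathfrak{d}$ instead of by the reduction, and arranging the case split so that it matches the three clauses in each statement is the only delicate point.
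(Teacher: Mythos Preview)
Your argument is correct and is a faithful unfolding of the paper's one-line proof (``a direct consequence of the definition''): the dominating vertex $\mathfrak{c}_0=F_1\times(0)\times(0)$ (resp.\ $\mathfrak{d}=(0)\times R_2\times R_3$) immediately gives $d\leq 2$, and the componentwise reduction to $S=R_2\times R_3$ handles the $d=1$ characterization. Your side observation that the clauses $K_2\times K_3=R_2\times R_3$ and $L_2\times L_3=R_2\times R_3$ in (2) are vacuous (since $\mathfrak{d}\notin\mathfrak{W}$) is also correct; the paper simply left them in for symmetry with part (1).
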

\begin{proof}
{This is a direct consequence of the definition.
%If $I_2=I_3=(0)$, then $(1,0,0)\in \mathfrak{c}'$ is a $\mathfrak{c}$-regular element and so $\mathfrak{c}$ and $\mathfrak{c}'$ are adjacent. If %$J_2=J_3=(0)$, then a similar proof shows that $\mathfrak{c}$ and $\mathfrak{c}'$ are adjacent. Now, suppose that $I_2\times I_3$ and $J_2\times J_3$ %are adjacent in $\Gamma_{reg}(R_2\times R_3)$. Then with no loss of generality, we can choose a $J_2\times J_3$-regular element in $I_2\times I_3$, %say $(x_2,x_3)$. Thus $(1,x_2,x_3)\in \mathfrak{c}$ is a $\mathfrak{c}'$-regular element and hence $\mathfrak{c}$ and $\mathfrak{c}'$ are adjacent. %Also, we know that if neither $I_2=I_3=(0)$ nor $J_2=J_3=(0)$ and $I_2\times I_3$ and $J_2\times J_3$ are not adjacent in $\Gamma_{reg}(R_2\times %R_3)$, then $F_1\times (0)\times (0)$ is adjacent to both of $\mathfrak{c}$ and $\mathfrak{c}'$. Thus in this case, we have %$d(\mathfrak{c},\mathfrak{c}')=2$. This completes the proof of (1). The second assertion is proved, similarly.
}
  \end{proof}
  \begin{prop}\label{distancec}
Let $R$ be an  Artinian non-reduced ring. If $\Gamma_{reg}(R)$ is a connected graph, then for every vertex $\mathfrak{c}=F_1\times I_2\times I_3\in\mathfrak{C}$, we have:
\begin{enumerate}
\item[\rm(1)]  $d(\mathfrak{c},\mathfrak{u})=
  \begin{cases}
   1;       & I_2=R_2 \\
   2;       &I_2\neq R_2\ {\rm and}\ \mathcal{C}^-_{R_3}(I_3)\neq\varnothing. \\
   3;       & {\rm Otherwise}.
  \end{cases}$
 \item[\rm(2)] $d(\mathfrak{c},\mathfrak{v})=
  \begin{cases}
   1;       &I_3=R_3\\
   2;       &  I_3\neq R_3\ {\rm and\ either}\ I_2=(0)\ {\rm or}\ \mathcal{C}^+(I_3)\neq\varnothing\\
   3;       &  {\rm otherwise}.
  \end{cases}$
  \item[\rm (3)]$d(\mathfrak{c},\mathfrak{d})=
  \begin{cases}
   2;       &   I_2=R_2, I_3=R_3\ {\rm or}\ \mathcal{C}^+_{R_3}(I_3)\neq\varnothing\\
   3;       &  d(\mathfrak{c},\mathfrak{d})\neq 2\ {\rm and }\ (I_2=(0), I_3=(0)\ {\rm or}\ \mathcal{C}^-_{R_3}(I_3)\neq\varnothing)\\
   4;       & {\rm Otherwise}.
  \end{cases}$
  \item[\rm(4)]
  $d(\mathfrak{a},\mathfrak{w})=
  \begin{cases}
   1;       &K_3=(0)\\
   2;       &  K_3\neq(0)\ {\rm and\ either}\ K_2=R_2\ {\rm or}\ \mathcal{C}^-(K_3)\neq\varnothing\\
   3;       & {\rm Otherwise}.\\
  \end{cases}$

 \item[\rm (5)] $d(\mathfrak{b},\mathfrak{w})=
  \begin{cases}
   1;       & K_2=(0) \\
   2;       & K_2\neq (0)\ {\rm and}\  \mathcal{C}^+(K_3)\neq\varnothing\\
   3;       & K_2\neq (0)\ {\rm and}\ \mathcal{C}^+(K_3)=\varnothing.\\
  \end{cases}$
\end{enumerate}
\end{prop}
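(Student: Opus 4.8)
The whole proposition runs on the component-wise adjacency criterion of Remark~\ref{productadjacency} together with two facts about the factors. First, in any Artinian ring a non-zero-divisor is a unit, so in $R_3$ (and in $R_2$, $F_1$) an ideal contains an $R$-regular element iff it is the whole ring; in particular $X_3$ contains an $R_3$-regular element iff $X_3=R_3$. Second, in the local Artinian ring $R_2$ every non-unit is nilpotent and hence a zero-divisor on each nonzero module, so for a fixed ideal $Z_2$ of $R_2$ an ideal $X_2$ contains a $Z_2$-regular element iff $Z_2=(0)$ or $X_2=R_2$, and the field $F_1$ behaves the same way in the first coordinate. For a proper nonzero ideal of $R_3$ the analogous condition is not this clean, but it is exactly what the sets $\mathcal{C}^{\pm}_{R_3}(\cdot)$ record, so I would keep it abstract. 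With these in hand, adjacency of an arbitrary vertex $X=X_1\times X_2\times X_3$ to each of $\mathfrak{a},\mathfrak{b},\mathfrak{d},\mathfrak{u},\mathfrak{v}$ becomes a short condition on coordinates — e.g.\ $X$ is adjacent to $\mathfrak{u}$ iff $X_2=R_2$ or $X_1=X_3=(0)$, and to $\mathfrak{d}$ iff $X_1=(0)$ or $(X_2,X_3)=(R_2,R_3)$ — and I would tabulate all five of these reductions first, since everything else is bookkeeping against them.

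Each of (1)--(5) then has the same three-layer shape. The value $d=1$ is read straight off the relevant reduction (for (1), $\mathfrak{c}$ and $\mathfrak{u}$ are adjacent iff $I_2=R_2$, as $\mathfrak{c}$ has first coordinate $F_1\neq(0)$). For $d=2$, whenever the stated condition holds I would exhibit an explicit common neighbour: $F_1\times R_2\times Z_3$ with $Z_3\in\mathcal{C}^-_{R_3}(I_3)$ in (1); either $\mathfrak{b}$ (when $I_2=(0)$) or $(0)\times(0)\times Z_3$ with $Z_3\in\mathcal{C}^+(I_3)$ in (2); and coordinate-swapped analogues in (4) and (5). Conversely, to force $d\geq 3$ when the condition fails, I would posit a common neighbour $\mathfrak{z}$ and feed both of its edges through the reductions and the two ring facts; in every branch each coordinate of $\mathfrak{z}$ is pinned to a value that collapses $\mathfrak{z}$ to $(0)$ or $R$ (not vertices), or to one of the two endpoints, or else forces a non-trivial third coordinate lying in a set $\mathcal{C}^{\pm}_{R_3}(\cdot)$ assumed empty — a contradiction each time.

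The upper bounds I would obtain from the triangle inequality applied to the already-proved Proposition~\ref{distancesimple} rather than by hand: $d(\mathfrak{c},\mathfrak{u})\leq d(\mathfrak{c},\mathfrak{a})+d(\mathfrak{a},\mathfrak{u})\leq 3$ in (1), $d(\mathfrak{c},\mathfrak{v})\leq d(\mathfrak{c},\mathfrak{b})+1\leq 3$ in (2), $d(\mathfrak{a},\mathfrak{w})\leq 1+d(\mathfrak{u},\mathfrak{w})\leq 3$ in (4), $d(\mathfrak{b},\mathfrak{w})\leq 1+d(\mathfrak{v},\mathfrak{w})\leq 3$ in (5), and $d(\mathfrak{c},\mathfrak{d})\leq d(\mathfrak{c},\mathfrak{a})+d(\mathfrak{a},\mathfrak{d})\leq 4$ in (3). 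For the middle value $d=3$ in (3) I would produce a length-$3$ walk matched to the active sub-condition: $\mathfrak{c}$--$\mathfrak{a}$--$*$--$\mathfrak{d}$ when $I_3=(0)$, the same through $\mathfrak{b}$ when $I_2=(0)$, and $\mathfrak{c}$--$(F_1\times R_2\times J_3)$--$\mathfrak{u}$--$\mathfrak{d}$ when $J_3\in\mathcal{C}^-_{R_3}(I_3)$.

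The genuine obstacle is the lower bound $d(\mathfrak{c},\mathfrak{d})\geq 4$ in (3), i.e.\ ruling out \emph{every} walk $\mathfrak{c}$--$\mathfrak{p}$--$\mathfrak{q}$--$\mathfrak{d}$ of length $3$ under $I_2\notin\{(0),R_2\}$, $I_3\notin\{(0),R_3\}$ and $\mathcal{C}^+_{R_3}(I_3)=\mathcal{C}^-_{R_3}(I_3)=\varnothing$. I would drive the case split by first coordinates. Since $\mathfrak{q}\neq\mathfrak{d}$ is adjacent to $\mathfrak{d}$ it has $Q_1=(0)$, and because an arc can reach a vertex with first coordinate $F_1$ only from another such vertex, the coordinate $P_1$ of $\mathfrak{p}$ gives two cases. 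If $P_1=(0)$ the edge $\mathfrak{c}$--$\mathfrak{p}$ is forced to be $\mathfrak{c}\to\mathfrak{p}$, and the two ring facts together with $\mathcal{C}^+_{R_3}(I_3)=\varnothing$ and $I_2\neq R_2$, $I_3\neq R_3$ force $P_2=P_3=(0)$, i.e.\ $\mathfrak{p}=(0)$ — impossible. If $P_1=F_1$ the edge $\mathfrak{p}$--$\mathfrak{q}$ is $\mathfrak{p}\to\mathfrak{q}$; on the edge $\mathfrak{c}$--$\mathfrak{p}$ the arc $\mathfrak{p}\to\mathfrak{c}$ forces $(P_2,P_3)=(R_2,R_3)$, i.e.\ $\mathfrak{p}=R$, while the arc $\mathfrak{c}\to\mathfrak{p}$ forces $\mathfrak{p}=F_1\times(0)\times(0)$, which in turn makes $\mathfrak{p}\to\mathfrak{q}$ collapse $\mathfrak{q}$ to $(0)$. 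Every branch being contradictory, no such walk exists and $d=4$. I expect the delicate point to be precisely this elimination — in particular verifying that each of the four hypotheses $I_2\neq(0),R_2$, $I_3\neq(0),R_3$, $\mathcal{C}^{+}_{R_3}(I_3)=\varnothing$, $\mathcal{C}^{-}_{R_3}(I_3)=\varnothing$ is invoked exactly where a non-trivial $P_2$ or $P_3$ would otherwise survive.
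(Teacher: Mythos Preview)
Your proposal is correct and follows essentially the same approach as the paper: both arguments rest on the component-wise adjacency criterion (Remark~\ref{productadjacency}) together with the triangle inequality via Proposition~\ref{distancesimple}, the only organizational difference being that the paper invokes Remark~\ref{transitivity} to reduce length-$2$ and length-$3$ paths to alternating-direction ``zigzag'' patterns before the coordinate analysis, whereas you drive the case split directly by the first coordinate and let the arc orientations fall out. The paper in fact writes out only part~(3) in full (declaring the others similar), and your identification of the $d\geq 4$ lower bound there as the one genuinely delicate step matches the paper's emphasis.
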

\begin{proof}
{We only prove (3). The other assertions are proved, similarly.
It is clear that $\mathfrak{c}$ and $\mathfrak{d}$ are not adjacent. On the other hand, since $\mathfrak{u}$ and $\mathfrak{d}$ are adjacent, (1) implies
that $2\leq d(\mathfrak{c},\mathfrak{d})\leq 4$. Now, we follow the  proof in the following three steps:

Step 1. $d(\mathfrak{c},\mathfrak{d})=2$ if and only if $I_2=R_2$, $I_3=R_3$ or $\mathcal{C}^+_{R_3}(I_3)\neq \varnothing$:\\
If $I_2=R_2$($I_3=R_3$), then both $\mathfrak{c}$ and $\mathfrak{d}$ are adjacent with $\mathfrak{u}$($\mathfrak{v}$). Assume that $J_3\in\mathcal{C}^+_{R_3}(I_3)$. Then $\mathfrak{c}$ and $\mathfrak{d}$ are adjacent to $(0)\times (0)\times J_3$. Therefore, in any case, we have $d(\mathfrak{c},\mathfrak{d})=2$. Conversely, let $d(\mathfrak{c},\mathfrak{d})=2$. Then by Remark \ref{transitivity}, there exists a vertex $J=J_1\times J_2\times J_3$ such that one of the paths $\mathfrak{c}\longleftarrow J\longrightarrow\mathfrak{d}$ and $\mathfrak{c}\longrightarrow J\longleftarrow\mathfrak{d}$ exists. By using Remark \ref{productadjacency}, one can deduce that the existence of the first path is impossible. So, we can suppose that only the second path exists.
By Remark \ref{productadjacency}, $J_1=(0)$. Hence either $J_2\neq(0)$ or $J_3\neq (0)$.   Therefore, Remark \ref{productadjacency} implies that $I_2=R_2$, $I_3=R_3$ or  $J_3\in \mathcal{C}^+(I_3)\neq \varnothing$.

From now, suppose that $I_2\neq R_2$, $I_3\neq R_3$ and $\mathcal{C}^+_{R_3}(I_3)=\varnothing$. This means that $d(\mathfrak{c},\mathfrak{d})\geq 3$.

Step 2. $d(\mathfrak{c},\mathfrak{d})=3$ if and only if either $I_2=(0)$ or $\mathcal{C}^-(I_3)\neq\varnothing$:\\
By Remark \ref{transitivity}, $d(\mathfrak{c},\mathfrak{d})=3$ if and only if there exist two vertices, say $J=J_1\times J_2\times J_3$ and $L=L_1\times L_2\times L_3$, such that one of the following paths exists:
\begin{eqnarray}\label{05}
\mathfrak{c}=F_1\times I_2\times I_3 \longleftarrow J_1\times J_2\times J_3 \longrightarrow L_1\times L_2\times L_3 \longleftarrow (0)\times R_2\times R_3=\mathfrak{d};
\end{eqnarray}
\begin{eqnarray}\label{06}
\mathfrak{c}=F_1\times I_2\times I_3 \longrightarrow J_1\times J_2\times J_3\longleftarrow L_1\times L_2\times L_3 \longrightarrow (0)\times R_2\times R_3=\mathfrak{d};
\end{eqnarray}
By Remark \ref{productadjacency}, Path (\ref{05}) exists if and only if $J_1=F_1$ if and only if either $J_2\neq R_2$ or $J_3\neq R_3$. On the other hand, from Remark \ref{productadjacency}, we deduce that $J_2\neq R_2$ if and only if $I_2=(0)$ and $J_3\neq R_3$ if and only if either $I_3=(0)$ or $\mathcal{C}^-(I_3)\neq\varnothing$. To complete the proof, it is enough to show that Path (\ref{06}) does not exist. Since $I_2\neq R_2$ and $\mathcal{C}^+(I_3)=\varnothing$, we deduce that $J_1=J_2=(0)$. Thus Remark \ref{productadjacency} implies that $J=F_1\times (0)\times (0)$ and $L=(0)\times R_2\times R_3$ and this contradicts the adjacency of $J$ and $L$. Hence Path (\ref{06}) does not exist and so, we are done.
}
\end{proof}
\begin{prop}\label{distancecandw}
Let $R$ be an  Artinian non-reduced ring. If  $\Gamma_{reg}(R)$ is a connected graph, then for every $\mathfrak{c}=F_1\times I_2\times I_3\in\mathfrak{C}$ and every $\mathfrak{w}=(0)\times K_2\times K_3\in \mathfrak{W}$, we have:
  $$d(\mathfrak{c},\mathfrak{w})=
  \begin{cases}
   1;       &   I_2 \times I_3\  {\rm contains\ a}\ K_2\times K_3{\rm-regular\ element}\\
   2;       &   I_2=K_2=(0), I_2=K_2=R_2\ {\rm or}\ \mathcal{C}^-(I_3,K_3)\cup\mathcal{C}^+(I_3,K_3)\neq\varnothing\\
   5;       &  I_2, K_2\ {\rm are\ nontrivial,\ } \mathcal{C}^+(I_3)\cup\mathcal{C}^-(I_3)\cup\mathcal{C}^+(K_3)\cup\mathcal{C}^-(K_3)=\varnothing\\
   3\ {\rm or}\ 4;       & {\rm Otherwise}.
  \end{cases}$$
\end{prop}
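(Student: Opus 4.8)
The plan is to first collapse the general adjacency criterion of Remark~\ref{productadjacency} into a far more rigid one by exploiting that $F_1$ is a field and, crucially, that $R_2$ is \emph{local} Artinian. The preliminary observation I would prove is: in an Artinian local ring $S$ with maximal ideal $\mathfrak{m}\neq(0)$, every nonzero ideal $A$ has $Z(A)=\mathfrak{m}$ (take the largest $j$ with $\mathfrak{m}^jA\neq(0)$ and any $0\neq a\in\mathfrak{m}^jA\subseteq A$; then $\mathfrak{m}a=(0)$, so every non-unit kills a nonzero element of $A$), whence the only $A$-regular elements are units and an ideal contains an $A$-regular element iff it equals $S$. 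Applying this to $F_1$ and to $R_2$, Remark~\ref{productadjacency} reduces to: there is an arc $X\longrightarrow Y$ exactly when $[Y_1=(0)$ or $X_1=F_1]$, $[Y_2=(0)$ or $X_2=R_2]$, and $[Y_3=(0)$ or $X_3$ contains a $Y_3$-regular element$]$. So only the third coordinate carries nontrivial regularity data, and this is precisely what $\mathcal{C}^\pm(I_3,K_3)$ record. Call a vertex \emph{type-$F$} resp. \emph{type-$0$} according as its first coordinate is $F_1$ or $(0)$; note that no arc runs from a type-$0$ vertex into a type-$F$ vertex.

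With this collapse in hand I would settle the three explicit branches. Distance $1$: since $\mathfrak{w}$ is type-$0$ and $\mathfrak{c}$ is type-$F$, the arc $\mathfrak{w}\longrightarrow\mathfrak{c}$ is impossible, so $d(\mathfrak{c},\mathfrak{w})=1$ iff $\mathfrak{c}\longrightarrow\mathfrak{w}$ iff $I_2\times I_3$ contains a $K_2\times K_3$-regular element, which is the first line. Distance $2$: by Remark~\ref{transitivity} a shortest $2$-path cannot have two arcs pointing the same way, so it is $\mathfrak{c}\longrightarrow J\longleftarrow\mathfrak{w}$ (a common out-neighbour, forced $J_1=(0)$) or $\mathfrak{c}\longleftarrow J\longrightarrow\mathfrak{w}$ (a common in-neighbour, forced $J_1=F_1$). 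Feeding the collapsed criterion into each pattern, a common out-neighbour exists iff $I_2=K_2=R_2$ (take $J=\mathfrak{u}$) or $\mathcal{C}^+(I_3,K_3)\neq\varnothing$, and a common in-neighbour exists iff $I_2=K_2=(0)$ (take $J=\mathfrak{b}$, whose third coordinate $R_3$ absorbs the regularity demand) or $\mathcal{C}^-(I_3,K_3)\neq\varnothing$; together these give exactly the second line. One must verify the degenerate coincidences here, e.g. $I_3=K_3=(0)$ is subsumed because $\mathcal{C}^-((0),(0))$ is the set of \emph{all} nontrivial ideals of $R_3$, nonempty as $R_3$ is not a field.

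The distance-$5$ branch is, perhaps surprisingly, the cleanest once the collapse is available. If $I_2,K_2$ are nontrivial and all four of $\mathcal{C}^\pm(I_3),\mathcal{C}^\pm(K_3)$ are empty, the emptiness first forces $I_3,K_3$ nontrivial (otherwise $\mathcal{C}^-(I_3)$ or $\mathcal{C}^+(I_3)$ would contain a nontrivial ideal of $R_3$); then the collapsed criterion shows $\mathfrak{c}$ has \emph{no} in-neighbour and unique out-neighbour $\mathfrak{p}:=F_1\times(0)\times(0)$, while dually $\mathfrak{w}$ has no out-neighbour and unique in-neighbour $\mathfrak{d}=(0)\times R_2\times R_3$. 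Hence every $\mathfrak{c}$--$\mathfrak{w}$ path begins $\mathfrak{c}-\mathfrak{p}$ and ends $\mathfrak{d}-\mathfrak{w}$, so $d(\mathfrak{c},\mathfrak{w})=2+d(\mathfrak{p},\mathfrak{d})$; and since the neighbours of $\mathfrak{p}$ are exactly the type-$F$ vertices and those of $\mathfrak{d}$ exactly the type-$0$ vertices (disjoint sets, so $d(\mathfrak{p},\mathfrak{d})\geq3$), with $\mathfrak{p}-\mathfrak{a}-\mathfrak{u}-\mathfrak{d}$ realizing distance $3$, I get $d(\mathfrak{p},\mathfrak{d})=3$ and thus $d(\mathfrak{c},\mathfrak{w})=5$.

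For the residual ``$3$ or $4$'' branch the lower bound $d\geq3$ is automatic (branches $1$ and $2$ have failed), so the whole task is to produce a path of length $\le4$, and this case analysis is what I expect to be the main obstacle. The negation of the distance-$5$ hypothesis means either one of $I_2,K_2$ is trivial or one of the four coordinate-$3$ sets is nonempty, and in each possibility I would exhibit an explicit detour: if $I_2=R_2$ then $\mathfrak{c}\longrightarrow\mathfrak{u}$ and $d(\mathfrak{u},\mathfrak{w})\le2$ by Proposition~\ref{distancesimple}(6), so $d\le3$; if $I_2=(0)$ then $\mathfrak{b}\longrightarrow\mathfrak{c}$ and $d(\mathfrak{b},\mathfrak{w})\le3$ by Proposition~\ref{distancec}(5), so $d\le4$; if some $\mathcal{C}^\pm(I_3)$ or $\mathcal{C}^\pm(K_3)$ is nonempty, a single regular element in coordinate $3$ yields a neighbour of $\mathfrak{c}$ or of $\mathfrak{w}$ lying in $\mathfrak{W}$ or $\mathfrak{C}$, after which Proposition~\ref{distanceccww} (the C-to-C and W-to-W distances are $\le2$) closes the gap within $4$; the cases on $K_2$ are handled symmetrically via Propositions~\ref{distancec}(1) and \ref{distanceccww}(1). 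The genuine difficulty is bookkeeping: organizing these overlapping sub-cases so that every auxiliary vertex produced is legal (nontrivial and distinct from the endpoints) and every invoked auxiliary distance actually applies, so that the negation of the distance-$5$ condition is covered without gaps.
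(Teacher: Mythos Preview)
Your approach is correct and, in two places, goes beyond what the paper actually does.

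For distances $1$ and $2$ your argument and the paper's coincide; you simply make explicit the structural observation (in a local Artinian ring an ideal contains a $J$-regular element only if it is the whole ring) that the paper uses tacitly through Remark~\ref{productadjacency}.

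For the distance-$5$ branch the arguments differ. The paper assumes the stated hypotheses, notes $d^-(\mathfrak{c})=d^+(\mathfrak{w})=0$, and then separately lists and eliminates the four possible directed shapes a path of length $3$ or $4$ could take. You instead show that $\mathfrak{c}$ has \emph{unique} neighbour $\mathfrak{p}=F_1\times(0)\times(0)$ and $\mathfrak{w}$ has unique neighbour $\mathfrak{d}$, reducing the whole computation to $d(\mathfrak{p},\mathfrak{d})=3$. This is a cleaner and more conceptual route to the same conclusion.

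The main difference is the ``$3$ or $4$'' branch. The paper's proof stops after the $d=5$ case and never produces a path of length $\le 4$ when the distance-$5$ hypotheses fail; the upper bound $d\le 4$ in the ``Otherwise'' case is left unproved there. You address it directly, and your plan works, with one point to tighten: in the sub-cases $\mathcal{C}^-(I_3)\neq\varnothing$ and $\mathcal{C}^+(K_3)\neq\varnothing$ the auxiliary neighbour you construct lies on the \emph{same} side (in $\mathfrak{C}$, resp.\ $\mathfrak{W}$) as the endpoint it is attached to, so Proposition~\ref{distanceccww} by itself does not finish. The fix is immediate within your own framework: e.g.\ if $J_3\in\mathcal{C}^-(I_3)$ then the neighbour $F_1\times R_2\times J_3$ of $\mathfrak{c}$ has second coordinate $R_2$, so your already-established $I_2=R_2$ case applies to it and gives $d\le 1+3=4$; the case $\mathcal{C}^+(K_3)\neq\varnothing$ is dual. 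This is exactly the bookkeeping you flagged as the remaining work.
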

\begin{proof}
{It is clear that $\mathfrak{w}$ contains no $\mathfrak{c}$-regular element. Hence $d(\mathfrak{c},\mathfrak{w})=1$ if and only if $\mathfrak{c}$ contains a $\mathfrak{w}$-regular element, say $(x_1,x_2,x_3)$, if and only if $(x_2,x_3)\in I_2\times I_3$ is a $K_2\times K_3$-regular element. Assume that $I_2\times I_3$ contains no $K_2\times K_3$-regular element. We claim that $d(\mathfrak{c},\mathfrak{w})=2$ if and only if $I_2=K_2=(0)$, $I_2=K_2=R_2$ or $\mathcal{C}^-(I_3,K_3)\cup\mathcal{C}^+(I_3,K_3)\neq\varnothing$. By Remark \ref{transitivity}, $d(\mathfrak{c},\mathfrak{w})=2$ if and only if there exists a vertex, say $J=J_1\times J_2\times J_3$, such that one of the following paths exists:
\begin{eqnarray}\label{11}
\mathfrak{c}=F_1\times I_2\times I_3 \longleftarrow J_1\times J_2\times J_3 \longrightarrow  (0)\times K_2\times K_3=\mathfrak{w};
\end{eqnarray}
\begin{eqnarray}\label{12}
\mathfrak{c}=F_1\times I_2\times I_3 \longrightarrow J_1\times J_2\times J_3\longleftarrow (0)\times K_2\times K_3=\mathfrak{w};
\end{eqnarray}
By Remark \ref{productadjacency}, Path (\ref{11}) exists if and only if $J_1=F_1$ if and only if either $J_2\neq R_2$ or $J_3\neq R_3$. On the other hand, from Remark \ref{productadjacency}, we deduce that $J_2\neq R_2$ if and only if $I_2=K_2=(0)$ and $J_3\neq R_3$ if and only if $\mathcal{C}^-(I_3,K_3)\neq\varnothing$. Also,  Path (\ref{12})  exists  if and only if $J_1=(0)$ if and only if either $J_2\neq (0)$ or $J_3\neq (0)$. Moreover, Remark \ref{productadjacency} implies that $J_2\neq (0)$ if and only if $I_2=K_2=R_2$ and $J_3\neq (0)$ if and only if
$\mathcal{C}^+(I_3,K_3)\neq\varnothing$. So, the claim is proved.
Finally, assume that $d(\mathfrak{c},\mathfrak{w})\geq 3$, $I_2$ and $K_2$ are non-trivial ideals of $R_2$ and $\mathcal{C}^-(I_3)\cup\mathcal{C}^+(I_3)\cup\mathcal{C}^-(K_3)\cup\mathcal{C}^+(K_3)=\varnothing$. Then \cite[Theorem 2.1]{actamathhungar} and Remark \ref{transitivity} imply that $d^+_{\overrightarrow{\Gamma_{reg}}(R)}(\mathfrak{w})=d^-_{\overrightarrow{\Gamma_{reg}}(R)}(\mathfrak{c})=0$. We show that $d(\mathfrak{c},\mathfrak{w})=5$. Suppose to the contrary, $d(\mathfrak{c},\mathfrak{w})=3$ or $4$. Then by Remark \ref{transitivity}, there exist three non-trivial ideals of $R$, say $J$, $L$ and $P$ such that one of the following paths exists:
\begin{eqnarray}\label{13}
F_1\times I_2\times I_3\longrightarrow J\longleftarrow L \longrightarrow (0)\times K_2\times K_3;
\end{eqnarray}
\begin{eqnarray}\label{14}
F_1\times I_2\times I_3\longleftarrow J\longrightarrow  L \longleftarrow(0)\times K_2\times K_3;
\end{eqnarray}
\begin{eqnarray}\label{15}
F_1\times I_2\times I_3\longrightarrow J\longleftarrow L \longrightarrow  P \longleftarrow(0)\times K_2\times K_3;
\end{eqnarray}
\begin{eqnarray}\label{16}
F_1\times I_2\times I_3\longleftarrow J\longrightarrow  L \longleftarrow P\longrightarrow(0)\times K_2\times K_3;
\end{eqnarray}
Since $d^+_{\overrightarrow{\Gamma_{reg}}(R)}(\mathfrak{w})=d^-_{\overrightarrow{\Gamma_{reg}}(R)}(\mathfrak{c})=0$, Paths (\ref{14}), (\ref{15}) and (\ref{16}) don't exist. Thus we can assume that Path (\ref{13}) exists. Then Remark \ref{productadjacency} implies that
$J=F_1\times (0)\times (0)$ and  $L=(0)\times R_2\times R_3$ which contradicts the adjacency of $J$ and $L$. Therefore, $d(\mathfrak{c},\mathfrak{w})=5$ and the proof is complete.}
\end{proof}
%%%%%%%%%%%%%%%%%%%%%%%%%%%%%%%%%%%%%%%%%%%%%%%%%%%%%%%%%%%%%%%%%%%%%%%%%%%%%%%%%%%%%%%%%%%%%%%%%%%%%%%%%%%%%%%%%%%%%%%%%%%%%%%%%%%%%%%%%%%%%%%%%%%%%%%%
%%%%%%%%%%%%%%%%%%%%%%%%%%%%%%%%%%%%%%%%%%%%%%%%%%%%%%%%%%%%%%%%%%%%%%%%%%%%%%%%%%%%%%%%%%%%%%%%%%%%%%%%%%%%%%%%%%%%%%%%%%%%%%%%%%%%%%%%%%%%%%%%%%%%%%%%
\vspace{2mm} \noindent{\bf\large 3. Main Results}

In this section,
%the eccentricity of any vertex of $\Gamma_{reg}(R)$ is calculated, where $|{\rm Max}(R)|\geq 3$ and $n_F(R)\geq 1$. Then
it is proved that the radius of $\Gamma_{reg}(R)$ equals $3$. The central vertices are characterized, too. First we need some lemmas.
\begin{lem}\label{k1C+-}
Let $R$ be an Artinian ring which is not field. If $n_F(R)\geq 1$, then for every ideal $I$ of $R$,  $\mathcal{C}^+(I)\cup\mathcal{C}^-(I)\neq\varnothing$.
\end{lem}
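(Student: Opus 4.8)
The plan is to exploit the field direct summand to manufacture an explicit regular element, and hence an explicit witness ideal, in every case. Since $n_F(R)\geq 1$ and $R$ is not a field, the Artinian decomposition $R\cong R_1\times\cdots\times R_n$ has at least one field factor and at least two factors in total (a single field factor would make $R$ a field); after reindexing I would write $R\cong F\times S$, where $F$ is a field and $S\neq (0)$ is Artinian. Every ideal $I$ of $R$ then has the form $I=I_1\times I'$ with $I_1\in\{(0),F\}$ (the only ideals of the field $F$) and $I'$ an ideal of $S$, and the two possibilities for $I_1$ will give the two halves of the dichotomy $\mathcal{C}^+(I)\cup\mathcal{C}^-(I)\neq\varnothing$.

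Before the case split I would record two bookkeeping facts that strip the definitions down to their essential clause. First, $Z((0))=\varnothing$, so every element of $R$ is $(0)$-regular; hence the condition ``$J$ contains a $(0)$-regular element'' in $\mathcal{C}^-(I)=\mathcal{C}^-(I,(0))$ is automatic for any non-trivial $J$. Second, $1\notin Z(J)$ for every ideal $J$, so the condition ``$R$ contains a $J$-regular element'' in $\mathcal{C}^+(I)=\mathcal{C}^+(I,R)$ is automatic. Thus $J\in\mathcal{C}^-(I)$ reduces to ``$J$ contains an $I$-regular element'' and $J\in\mathcal{C}^+(I)$ reduces to ``$I$ contains a $J$-regular element.'' I would also use the componentwise description of regularity in a product: $(r_1,r')$ is $(M_1\times M')$-regular if and only if $r_1$ is $M_1$-regular in $F$ and $r'$ is $M'$-regular in $S$.

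The construction is then immediate from the idempotents. I would put $J_a=F\times(0)$ and $J_b=(0)\times S$; both are non-trivial since $F\neq(0)$ and $S\neq(0)$. If $I_1=(0)$, I take the element $(0,1)$: its first coordinate is $(0)$-regular in $F$ (because $I_1=(0)$) and its second coordinate $1$ is $I'$-regular in $S$ (because $1\notin Z(I')$), so $(0,1)$ is $I$-regular; since $(0,1)\in J_b$, this shows $J_b\in\mathcal{C}^-(I)$. If instead $I_1=F$, I take $(1,0)\in I$: it is $J_a$-regular because $1\notin Z(F)=\{0\}$ and $0$ is $(0)$-regular in $S$, so $I$ contains a $J_a$-regular element and $J_a\in\mathcal{C}^+(I)$. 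These two cases are exhaustive for every ideal $I$ (including $I=(0)$ and $I=R$), giving $\mathcal{C}^+(I)\cup\mathcal{C}^-(I)\neq\varnothing$ throughout.

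The one genuinely delicate point, and the thing I would flag at the outset, is that the witness ideal is permitted to coincide with $I$ itself: the sets $\mathcal{C}^\pm$ are defined purely by containment conditions, with no requirement that $J\neq I$. This is why I would argue directly with regular elements rather than through the arc-based Remark \ref{productadjacency}, whose arcs presuppose distinct vertices. The matter is decisive in the extremal case $R\cong F_1\times F_2$ (a product of two fields): for $I=(0)\times F_2$ the only non-trivial ideal witnessing membership is $J_b=I$ itself, and indeed $\Gamma_{reg}(R)$ is then disconnected, yet the conclusion survives precisely because $J=I$ is allowed. I expect this to be the sole place a careless argument would break; once it is noted, the remaining content is just the routine componentwise regularity checks above, and no connectivity hypothesis on $\Gamma_{reg}(R)$ is needed.
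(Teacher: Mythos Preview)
Your proof is correct and follows essentially the same route as the paper: write $R\cong F\times S$ with $F$ a field and $S\neq(0)$, split on whether the first coordinate of $I$ is $(0)$ or $F$, and exhibit $(0)\times S\in\mathcal{C}^-(I)$ or $F\times(0)\in\mathcal{C}^+(I)$ accordingly. Your explicit identification of the regular elements $(0,1)$ and $(1,0)$ and the remark that the witness $J$ may equal $I$ are additional commentary, not a different argument.
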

\begin{proof}
{Since $R$ is an Artinian  ring, \cite[Theorem 8.7]{ati} implies that $R\cong F_1\times R_2$, where $F_1$ is a field and $R_2$ is an Artinian ring. For every ideal $I=I_1\times I_2$ of $R$, either $I_1=(0)$ or $I_1=F_1$. If $I_1=(0)$, then $(0)\times R_2\in \mathcal{C}^-(I)$. Also, if $I_1=F_1$, then $F_1\times (0)\in \mathcal{C}^+(I)$. Thus in any case, $\mathcal{C}^+(I)\cup\mathcal{C}^-(I)\neq\varnothing$.
}
\end{proof}

%\begin{defn}
{\rm Let $R=R_1\times R_2\times\cdots\times R_n$ be an Artinian ring, where every $R_i$ is an Artinian local ring. For every ideal $I=I_1\times I_2\times\cdots\times I_n$ of $R$, setting
$$I_i^c=
  \begin{cases}
   R_i;       &  I_i=(0)\\
   (0);       & I_i=R_i\\
   I_i;       & I_i\ {\rm is\ a\ non-trivial \ ideal \ of}\ R_i,
     \end{cases}$$
we define the {\it complement} of $I$ to be $I^c=I_1^c\times I_2^c\times\cdots\times I_n^c$. Also, for every subset $X$ of ideals
of $R$, by $X^c$, we mean the set $\{I^c|\ I\in X\}$. }
%\end{defn}

%In the following lemma, it is shown that the eccentricity of every vertex of $\Gamma_{reg}(R)$ is at least $3$.
\begin{lem}\label{eccentricity3}
Let  $R$ be an Artinian ring such that $\Gamma_{reg}(R)$ is a connected graph. Then for every vertex $I$ of $\Gamma_{reg}(R)$, $e(I)\geq 3$.
\end{lem}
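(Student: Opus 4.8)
The plan is to show, for an arbitrary non-trivial ideal $I$, that its complement $I^c$ lies at distance at least $3$ from $I$; since $e(I)\geq d(I,I^c)$, this yields the claim. First I would fix a decomposition $R\cong R_1\times\cdots\times R_n$ into Artinian local rings (\cite[Theorem 8.7]{ati}); connectedness of $\Gamma_{reg}(R)$ guarantees, by \cite[Theorem 2.3]{actamathhungar}, that $n\geq 3$ and that some factor, say $R_{i_0}$, is a field. For a vertex $J=J_1\times\cdots\times J_n$ I set $\mathrm{supp}(J)=\{i:J_i\neq(0)\}$ and $F(J)=\{i:J_i=R_i\}$, so that $F(J)\subseteq\mathrm{supp}(J)$; directly from the definition of the complement one has $F(I^c)=\{1,\dots,n\}\setminus\mathrm{supp}(I)$ and $\mathrm{supp}(I^c)=\{1,\dots,n\}\setminus F(I)$.

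The key step is to turn Remark \ref{productadjacency} into a purely combinatorial adjacency rule. Because every non-unit of an Artinian local ring is nilpotent, an element that is regular on a nonzero module is forced to be a unit; hence for $J_i\neq(0)$ the only ideal of $R_i$ containing a $J_i$-regular element is $R_i$ itself, whereas for $J_i=(0)$ every element is $J_i$-regular, so $\mathcal{C}^-_{R_i}(J_i)$ imposes no constraint. Substituting this into Remark \ref{productadjacency} yields the equivalence that I expect to carry the whole argument: there is an arc $K\longrightarrow J$ in $\overrightarrow{\Gamma_{reg}}(R)$ if and only if $\mathrm{supp}(J)\subseteq F(K)$. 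Establishing this equivalence cleanly is really the only delicate point; after it, the proof is set-theoretic bookkeeping.

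With the criterion available I would first note $I\neq I^c$: they differ in the field coordinate $i_0$, where $I_{i_0}$ is trivial and is flipped by complementation. Next, $I$ and $I^c$ are non-adjacent: an arc $I\longrightarrow I^c$ would require $\mathrm{supp}(I^c)\subseteq F(I)$, i.e. $\{1,\dots,n\}\setminus F(I)\subseteq F(I)$, which forces $I=R$; symmetrically $I^c\longrightarrow I$ forces $I=(0)$. As $I$ is non-trivial, neither arc exists, so $d(I,I^c)\geq 2$.

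It remains to exclude a common neighbour $K$. Each of the adjacencies $K\sim I$ and $K\sim I^c$ is realised by one of two arc orientations, leaving four cases. If both arcs are oriented towards $I$ and $I^c$, then $F(K)\supseteq\mathrm{supp}(I)\cup\mathrm{supp}(I^c)=\{1,\dots,n\}$ (using $\mathrm{supp}(I)\supseteq F(I)$), so $K=R$; if both point towards $K$, then $\mathrm{supp}(K)\subseteq F(I)\cap F(I^c)=\varnothing$, so $K=(0)$. In each mixed case I combine an inclusion $\mathrm{supp}(I)\subseteq F(K)$ (resp. $\mathrm{supp}(I^c)\subseteq F(K)$) with $\mathrm{supp}(K)\subseteq F(I^c)$ (resp. $\subseteq F(I)$) and use $F(K)\subseteq\mathrm{supp}(K)$ to squeeze $\mathrm{supp}(I)$ inside $F(I^c)=\{1,\dots,n\}\setminus\mathrm{supp}(I)$ (resp. $\mathrm{supp}(I^c)$ inside $F(I)$), again forcing $I=(0)$ or $I=R$. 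All four cases contradict the non-triviality of $I$; hence no common neighbour exists, $d(I,I^c)\geq 3$, and therefore $e(I)\geq 3$. The main obstacle, as indicated, is the rigorous derivation of the support criterion; the four-case analysis is then immediate.
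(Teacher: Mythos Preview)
Your proof is correct and follows essentially the same route as the paper: both arguments exhibit $I^c$ as a vertex at distance at least $3$ from $I$ by ruling out adjacency and a common neighbour via the product-adjacency criterion (Remark~\ref{productadjacency}). The only differences are presentational: the paper invokes Remark~\ref{transitivity} to collapse your four common-neighbour cases to the two ``same orientation'' ones, and it asserts $I\not\sim I^c$ and $I\neq I^c$ without spelling out the support/full-support bookkeeping that you make explicit.
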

\begin{proof}
{Let $R$ be an Artinian ring. Since $\Gamma_{reg}(R)$ is connected, \cite[Theorem 3.2]{actamathhungar} and \cite[Theorem 8.7]{ati} imply that $R\cong F_1\times R_2\times \cdots\times R_n$, where  $n\geq 3$, $F_1$ is a field and every $R_i$, $2\leq i\leq n$, is an Artinian local ring. So, $I=I_1\times I_2\times\cdots\times I_n$, where every $I_i$ is an ideal of $R_i$.
We show that  $d(I,I^c)\geq 3$. Suppose to the contrary, $d(I,I^c)\leq 2$. It is clear that $I$ and $I^c$ are not adjacent. Thus by Remark \ref{transitivity}, there exists a vertex, say $J=J_1\times J_2\times\cdots\times J_n$ such that one of the following paths exists:
\begin{eqnarray*}%\label{yazdah}
I=I_1\times I_2\times\cdots\times I_n\longleftarrow J_1\times J_2\times\cdots \times J_n\longrightarrow I_1^c\times I_2^c\times\cdots\times I_n^c=I^c
\end{eqnarray*}
\begin{eqnarray*}%\label{dah}
I=I_1\times I_2\times\cdots\times I_n\longrightarrow J_1\times J_2\times\cdots \times J_n\longleftarrow I_1^c\times I_2^c\times\cdots\times I_n^c=I^c
\end{eqnarray*}
If the first path exists, then Remark \ref{productadjacency} implies that for every $i$, $J_i$ contains an $I_i$-regular element and $J_i$ contains an $I_i^c$-regular element. Thus $J_i=R_i$, for every $i$, and hence $J=R$, a contradiction. Similarly, it is seen that the existence of the second path leads to a contradiction.
}
\end{proof}

\begin{thm}\label{abduv-eccentricity}
Let  $R$ be an Artinian non-reduced ring such that $\Gamma_{reg}(R)$ is a connected graph.
%and $k$ be the number of fields, appeared in the decomposition of $R$ to direct product of Artinian local rings.
Then the following statements hold:\\
$\rm(i)$ $e(\mathfrak{a})=e(\mathfrak{b})=e(\mathfrak{u})=e(\mathfrak{v})=3$.\\
$\rm(ii)$ $e(\mathfrak{d})=3$ if and only if $n_F(R)\geq 2$.
\end{thm}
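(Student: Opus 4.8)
The plan is to reduce every eccentricity in the statement to the distance formulas already compiled in Propositions \ref{distancesimple}, \ref{distanceccww}, \ref{distancec} and \ref{distancecandw}, and to invoke Lemma \ref{eccentricity3} for the universal lower bound $e(I)\geq 3$. Since $R$ is non-reduced and $\Gamma_{reg}(R)$ is connected, I may fix the decomposition $R\cong F_1\times R_2\times R_3$ of Section~2, so that every vertex lies in $\mathfrak{A}\cup\mathfrak{C}\cup\mathfrak{W}$. Computing the eccentricity of a fixed vertex then just means maximizing the tabulated distance to each of these three families.

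For part (i) I would carry out $\mathfrak{a}$ in full and then repeat the identical bookkeeping for $\mathfrak{b},\mathfrak{u},\mathfrak{v}$. By Lemma \ref{eccentricity3}, $e(\mathfrak{a})\geq 3$, and Proposition \ref{distancesimple}(3) already realizes this bound via $d(\mathfrak{a},\mathfrak{v})=3$. It remains to bound $d(\mathfrak{a},X)\leq 3$ for all $X$: Proposition \ref{distancesimple}(1),(2) handle $X\in\mathfrak{A}$, Proposition \ref{distancesimple}(4) gives $d(\mathfrak{a},\mathfrak{c})\leq 2$ for $\mathfrak{c}\in\mathfrak{C}$, and Proposition \ref{distancec}(4) gives $d(\mathfrak{a},\mathfrak{w})\leq 3$ for $\mathfrak{w}\in\mathfrak{W}$; hence $e(\mathfrak{a})=3$. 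The same harvest works for $\mathfrak{b}$ (realizer $d(\mathfrak{b},\mathfrak{u})=3$, using \ref{distancesimple}(5) and \ref{distancec}(5)), for $\mathfrak{u}$ (realizer $d(\mathfrak{u},\mathfrak{b})=3$, using \ref{distancec}(1) and \ref{distancesimple}(6)), and for $\mathfrak{v}$ (realizer $d(\mathfrak{v},\mathfrak{a})=3$, using \ref{distancec}(2) and \ref{distancesimple}(7)).

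For part (ii) the point of departure is that $\mathfrak{d}$ is within distance $2$ of every vertex of $\mathfrak{A}$ and within distance $1$ of every $\mathfrak{w}\in\mathfrak{W}$ (Proposition \ref{distancesimple}(1),(2)), so the only distances from $\mathfrak{d}$ that can exceed $3$ are the values $d(\mathfrak{c},\mathfrak{d})$, which by Proposition \ref{distancec}(3) never exceed $4$. Thus $e(\mathfrak{d})\in\{3,4\}$, and since $e(\mathfrak{d})\geq 3$ by Lemma \ref{eccentricity3}, we get $e(\mathfrak{d})=3$ exactly when no $\mathfrak{c}=F_1\times I_2\times I_3\in\mathfrak{C}$ satisfies $d(\mathfrak{c},\mathfrak{d})=4$. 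Reading the ``Otherwise'' branch of Proposition \ref{distancec}(3), $d(\mathfrak{c},\mathfrak{d})=4$ is equivalent to $(0)\neq I_2\neq R_2$, $(0)\neq I_3\neq R_3$ and $\mathcal{C}^+_{R_3}(I_3)=\mathcal{C}^-_{R_3}(I_3)=\varnothing$. The first condition forces $R_2$ not to be a field, and I would then prove that an ideal $I_3$ with the last two properties exists if and only if $n_F(R_3)=0$. One implication is immediate from Lemma \ref{k1C+-}: if $R_3$ is not a field and $n_F(R_3)\geq 1$, then $\mathcal{C}^+_{R_3}(I_3)\cup\mathcal{C}^-_{R_3}(I_3)\neq\varnothing$ for every $I_3$, so no such $I_3$ exists and $d(\mathfrak{c},\mathfrak{d})\leq 3$ always. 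For the converse, when $n_F(R_3)=0$ I would take $I_3={Nil}(R_3)$, the product of the (nonzero) maximal ideals of the local factors of $R_3$; here the $I_3$-regular elements of $R_3$ are precisely the units while $I_3$ consists only of nilpotents, which forces both $\mathcal{C}^+_{R_3}(I_3)$ and $\mathcal{C}^-_{R_3}(I_3)$ to be empty, and $\mathfrak{c}=F_1\times\mathfrak{m}_2\times{Nil}(R_3)\in\mathfrak{C}$ then sits at distance $4$ from $\mathfrak{d}$.

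Assembling this, $e(\mathfrak{d})=4$ holds precisely when $R_2$ is not a field and $n_F(R_3)=0$, i.e. $n_F(R_2)=n_F(R_3)=0$; since $n_F(R)=1+n_F(R_2)+n_F(R_3)$, this is exactly $n_F(R)=1$, so $e(\mathfrak{d})=3\iff n_F(R)\geq 2$. I expect the genuine difficulty to lie entirely in this last equivalence: part (i) and the reduction in part (ii) are a mechanical reading of the Section~2 distance tables, whereas deciding exactly when a nontrivial ideal $I_3$ with $\mathcal{C}^+_{R_3}(I_3)=\mathcal{C}^-_{R_3}(I_3)=\varnothing$ exists, and matching that to the arithmetic of $n_F$, is where the structural input (Lemma \ref{k1C+-} together with the explicit ${Nil}(R_3)$ computation) is really needed.
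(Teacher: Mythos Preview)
Your proposal is correct and follows essentially the same route as the paper: part (i) is read off from Propositions~\ref{distancesimple} and~\ref{distancec} together with Lemma~\ref{eccentricity3}, and for part (ii) both you and the paper reduce to $d(\mathfrak{c},\mathfrak{d})$ via Proposition~\ref{distancec}(3), use Lemma~\ref{k1C+-} when $R_3$ has a field factor, and exhibit $I_3={\rm Nil}(R_3)$ as the distance-$4$ witness when $n_F(R_3)=0$. Two small remarks: in the paper's fixed decomposition $R_2$ is already a non-field local ring, so your clause ``the first condition forces $R_2$ not to be a field'' is automatic and $n_F(R_2)=0$ always; and you should say one word about the case where $R_3$ itself is a field (then there is no nontrivial $I_3$ at all, so the ``Otherwise'' branch of Proposition~\ref{distancec}(3) is vacuously avoided), which the paper handles explicitly.
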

\begin{proof}
{(i) This follows from Propositions \ref{distancesimple}, \ref{distancec} and Lemma \ref{eccentricity3}.\\
(ii) By Proposition \ref{distancesimple} and Lemma \ref{eccentricity3}, it is enough to check $d(\mathfrak{d},\mathfrak{c})$, where $\mathfrak{c}=F_1\times I_2\times I_3\in \mathfrak{C}$. First suppose that $n_F(R)\geq 2$. If $R_3$ is not field, then Lemma \ref{k1C+-} yields that $\mathcal{C}^+(I_3)\cup \mathcal{C}^-(I_3)\neq \varnothing$, for every ideal $I_3$ of $R_3$. Thus by Proposition \ref{distancec} (3), $d(\mathfrak{d},\mathfrak{c})\leq 3$.  Also, if $R_3$ is a field, then $I_3$ is a trivial ideal of $R_3$ and so again by Proposition \ref{distancec} (3), $d(\mathfrak{d},\mathfrak{c})\leq 3$. Hence $e(\mathfrak{d})=3$. Now, assume that $n_F(R)=1$.  Then Proposition \ref{distancec} (3) implies that $d(\mathfrak{d},F_1\times I_2\times {\rm Nil}(R_3))=4$ and so we are done.
}
\end{proof}

\begin{lem}\label{diam345}
Let $R$ be an Artinian ring such that $\Gamma_{reg}(R)$ is connected. Then the following statements hold:\\
$\rm(i)$ ${\rm diam}(\Gamma_{reg}(R))=5$ if and only if $n_F(R)=1$.\\
$\rm(ii)$ ${\rm diam}(\Gamma_{reg}(R))=4$ if and only if $n_F(R)=2$.\\
$\rm(iii)$ ${\rm diam}(\Gamma_{reg}(R))=3$ if and only if $n_F(R)\geq 3$.
\end{lem}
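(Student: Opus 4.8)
The plan is to collapse the whole distance computation onto a transparent combinatorial model and then to split into the three cases according to $f:=n_F(R)$. Write $R\cong R_1\times\cdots\times R_n$ with each $R_i$ Artinian local; connectivity forces $n\geq 3$ and $f\geq 1$, so the cases $f=1$, $f=2$, $f\geq 3$ are exhaustive. Since the three target values $5,4,3$ are distinct, it suffices to prove the forward implications $f=1\Rightarrow{\rm diam}=5$, $f=2\Rightarrow{\rm diam}=4$, $f\geq 3\Rightarrow{\rm diam}=3$; the three stated equivalences then follow at once. The lower bound ${\rm diam}\geq 3$ is Lemma \ref{eccentricity3}, and the ceiling ${\rm diam}\leq 5$ needed for the case $f=1$ (where $R$ is non-reduced) comes from the distance tables of Propositions \ref{distancesimple}--\ref{distancecandw}, whose largest value is $5$.

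The crucial first step is to rewrite adjacency. In an Artinian local ring $(R_i,\mathfrak{m}_i)$ and for a nonzero ideal $J_i$, an element is $J_i$-regular iff it is a unit: units are regular, while a non-unit lies in the nilpotent ideal $\mathfrak{m}_i$, so multiplication by it is a nilpotent endomorphism of $J_i\neq(0)$ and hence has nonzero kernel. Substituting this into Remark \ref{productadjacency}, for nontrivial ideals $I=(I_i)$ and $J=(J_i)$ there is an arc $I\longrightarrow J$ iff $I_i=R_i$ whenever $J_i\neq(0)$. Setting ${\rm Full}(I)=\{i:I_i=R_i\}$ and ${\rm Supp}(J)=\{i:J_i\neq(0)\}$, this says ${\rm Supp}(J)\subseteq{\rm Full}(I)$, so $I,J$ are adjacent iff ${\rm Supp}(J)\subseteq{\rm Full}(I)$ or ${\rm Supp}(I)\subseteq{\rm Full}(J)$. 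Thus adjacency, and hence every distance, depends only on the type vector $\tau(I)\in\{0,*,1\}^n$ recording whether $I_i$ is $(0)$, nontrivial, or $R_i$, where $\tau_i=*$ is allowed exactly at the non-field coordinates. A two-common-neighbour computation then yields the dichotomy: writing $[n]=\{1,\dots,n\}$, we have $d(I,J)\geq 3$ iff $I,J$ are non-adjacent with ${\rm Full}(I)\cap{\rm Full}(J)=\varnothing$ and ${\rm Supp}(I)\cup{\rm Supp}(J)=[n]$, and $d(I,J)\leq 2$ otherwise.

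With this dichotomy the cases become set-covering arguments. Assume $d(I,J)\geq 3$. At each of the $f$ field coordinates $i$ one has $I_i,J_i\in\{(0),R_i\}$, and the two conditions above force exactly one of them to equal $R_i$; let $A$ (resp. $B$) be the set of field coordinates full in $I$ (resp. $J$), so $|A|+|B|=f$. Writing $e_p$ for the ideal equal to $R_p$ in coordinate $p$ and $(0)$ elsewhere, if $|A|\geq 2$ then $I-e_p-Y-J$ is a path of length $3$, where $p\in A$ and $Y$ is full exactly on ${\rm Supp}(J)\cup\{p\}$ (a proper ideal, since a second coordinate of $A$ vanishes in $J$); the case $|B|\geq 2$ is symmetric. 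Hence $f\geq 3$ forces $\max(|A|,|B|)\geq 2$, giving ${\rm diam}=3$. For $f=2$ the only residual configuration is $|A|=|B|=1$, handled by the length-$4$ path $I-e_p-Y-e_q-J$ ($p\in A$, $q\in B$, $Y$ full on $\{p,q\}$), so ${\rm diam}\leq 4$; and for $I$ of type $(1,0,*,\dots,*)$, $J=I^c$ of type $(0,1,*,\dots,*)$ a direct check shows their (tiny) neighbourhoods cannot be joined one step shorter, so $d(I,J)=4$. For $f=1$ I take $I$ of type $(0,*,\dots,*)$ and $J=I^c$ of type $(1,*,\dots,*)$: then $I$ has the single neighbour $(0)\times R_2\times\cdots\times R_n$ and $J$ the single neighbour $R_1\times(0)\times\cdots\times(0)$, which lie at distance $3$, whence $d(I,J)=5$.

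The main obstacle is precisely the ambiguity the propositions leave open: Proposition \ref{distancecandw} only pins the $\mathfrak{c}$--$\mathfrak{w}$ distance down to ``$3$ or $4$'' (and to $5$ in the extreme case), and separating ${\rm diam}=4$ from ${\rm diam}=3$ requires resolving that gap. The combinatorial reformulation is what removes it, by exhibiting the obstruction to a short path as a statement about how many field coordinates are available to serve as hubs $e_p$. The only genuinely delicate points will be verifying the local regularity fact underlying the reformulation and confirming that the two complementary examples realise distances $4$ and $5$ rather than $3$; the latter is a finite check that the uniquely-determined neighbours of those vertices are themselves too far apart to shorten the path.
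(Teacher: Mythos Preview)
Your argument is correct, and it takes a genuinely different route from the paper. The paper's proof of this lemma is essentially a citation: it invokes \cite[Theorem 2.10]{khashayarmanesh}, which already computes the diameter of $\Gamma_{reg}(R)$ in terms of a zero-divisor condition on certain factors, and then translates that condition into the count $n_F(R)$ via the observation that $Z(\mathrm{Nil}(S))=Z(S)$ holds in an Artinian ring $S$ precisely when $S$ has no field as a direct summand. Your approach instead rebuilds the diameter computation from scratch by encoding each ideal by its type vector in $\{0,*,1\}^n$, reducing adjacency to the containment ${\rm Supp}(J)\subseteq{\rm Full}(I)$, and extracting a clean distance-$2$ dichotomy; the three cases then become short set-covering arguments in which the field coordinates act as the available ``hubs'' $e_p$. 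What you gain is a self-contained proof that does not appeal to the external diameter theorem and that makes the role of $n_F(R)$ transparent (it is exactly the number of hub coordinates). What the paper's route gains is brevity, since all the work is already packaged in the cited result. Your reliance on Propositions~\ref{distancesimple}--\ref{distancecandw} for the global bound ${\rm diam}\leq 5$ in the $f=1$ case is legitimate, though you could equally well avoid it: once you know the two complementary vertices have unique neighbours $\mathfrak{d}$ and $e_1$, your own dichotomy gives $d(\mathfrak{d},e_1)=3$ directly, which already forces $d(I,J)=5$ without invoking the tables.
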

\begin{proof}
{Since $\Gamma_{reg}(R)$ is connected, \cite[Theorem 2.1]{actamathhungar} implies that $R\cong F_1\times R_2\times R_3$, where $F_1$ is a field, $R_2$ is an Artinian local ring which is not field and $R_3$ is an Artinian ring. Thus the assertion  follows from \cite[Theorem 2.10]{khashayarmanesh} and this fact that in any Artinian ring $S$, $Z({\rm Nil}(S))=Z(S)$ if and only if $S$ contains no field as its direct summand.}
\end{proof}

From Lemmas \ref{eccentricity3} and \ref{diam345}, we have the following immediate corollary.
\begin{cor}\label{F3ecc3}
Let  $R$ be an Artinian non-reduced ring and $\Gamma_{reg}(R)$ be a connected graph. If $n_F(R)\geq 3$, then for every vertex $I$ of $\Gamma_{reg}(R)$, $e(I)= 3$.
\end{cor}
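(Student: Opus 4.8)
The plan is to read off the conclusion directly from the two preceding lemmas, since the eccentricity of an arbitrary vertex is squeezed between a lower bound coming from Lemma~\ref{eccentricity3} and an upper bound furnished by the exact diameter computed in Lemma~\ref{diam345}.

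First I would invoke the definitional fact that, for any connected graph, the diameter equals the maximum eccentricity taken over all of its vertices; in particular $e(I)\leq{\rm diam}(\Gamma_{reg}(R))$ for every vertex $I$. Under the standing hypothesis $n_F(R)\geq 3$, Lemma~\ref{diam345}(iii) gives ${\rm diam}(\Gamma_{reg}(R))=3$, so that $e(I)\leq 3$ for every vertex $I$. On the other hand, Lemma~\ref{eccentricity3} supplies the matching lower bound $e(I)\geq 3$, valid for every vertex of a connected $\Gamma_{reg}(R)$. Combining the two inequalities forces $e(I)=3$ for each $I$, which is exactly the assertion.

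The argument presents essentially no obstacle: both the sharp lower bound and the exact value of the diameter have already been established in the cited lemmas, so all that remains is to juxtapose them. The only point meriting attention is that the hypotheses of the two lemmas are simultaneously in force---namely that $R$ is Artinian (here even non-reduced) and that $\Gamma_{reg}(R)$ is connected---so that both bounds are legitimately available. Since these are precisely the standing assumptions of the corollary, together with $n_F(R)\geq 3$, the equality $e(I)=3$ holds for every vertex, completing the proof.
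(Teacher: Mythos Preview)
Your argument is correct and matches the paper's own approach exactly: the paper states this corollary as an immediate consequence of Lemmas~\ref{eccentricity3} and~\ref{diam345}, and you have spelled out precisely why---the diameter bound from Lemma~\ref{diam345}(iii) gives $e(I)\leq 3$, while Lemma~\ref{eccentricity3} gives $e(I)\geq 3$.
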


\begin{lem}\label{reg2}
Let $I$ be an ideal of the Artinian ring $R$. Then $\mathcal{C}^+(I)=\varnothing$ if and only if $I\subseteq {\rm Nil}(R)$.
\end{lem}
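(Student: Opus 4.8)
The plan is to first unwind the definition so that the set $\mathcal{C}^+(I)$ becomes tractable. By definition $\mathcal{C}^+(I)=\mathcal{C}^+(I,R)$, so a non-trivial ideal $J$ lies in $\mathcal{C}^+(I)$ exactly when $I$ contains a $J$-regular element and $R$ contains a $J$-regular element. The second requirement is automatic: for any non-trivial (hence non-zero) ideal $J$ the identity satisfies $1\cdot y=y\neq 0$ for every $0\neq y\in J$, so $1\notin Z(J)$ and $1$ is $J$-regular. Therefore $\mathcal{C}^+(I)=\{J : J\ \text{non-trivial},\ I\ \text{contains a}\ J\text{-regular element}\}$, and consequently $\mathcal{C}^+(I)=\varnothing$ if and only if no element of $I$ is $J$-regular for any non-trivial ideal $J$, i.e. every element of $I$ lies in $Z(J)$ for every non-trivial $J$.

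For the implication $I\subseteq {\rm Nil}(R)\Rightarrow\mathcal{C}^+(I)=\varnothing$ I would prove the pointwise fact that a nilpotent element is never regular on a non-zero ideal. Fix $x\in I$ with $x^t=0$, a non-trivial ideal $J$, and some $0\neq y\in J$. Let $m\geq 1$ be minimal with $x^m y=0$; such an $m$ exists because $x^t y=0$, and $m\geq 1$ since $y\neq 0$. Then $x^{m-1}y\in J$ (as $J$ is an ideal) is non-zero by minimality and satisfies $x\cdot(x^{m-1}y)=x^m y=0$, so $x\in Z(J)$. Hence $I$ contains no $J$-regular element for any non-trivial $J$, giving $\mathcal{C}^+(I)=\varnothing$.

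For the converse I argue by contrapositive through the product decomposition. Assume $I\not\subseteq{\rm Nil}(R)$ and pick a non-nilpotent $x\in I$. By \cite[Theorem 8.7]{ati} write $R\cong R_1\times\cdots\times R_n$ with each $R_i$ Artinian local and $x=(x_1,\ldots,x_n)$. Since $x$ is non-nilpotent some coordinate $x_i$ is non-nilpotent in $R_i$; because $R_i$ is local Artinian its maximal ideal equals ${\rm Nil}(R_i)$, so every non-nilpotent element is a unit and $x_i$ is a unit. If $x$ is a non-unit, then $n\geq 2$ (in a local Artinian ring a non-unit is nilpotent), the ideal $J=(0)\times\cdots\times R_i\times\cdots\times(0)$ is non-trivial, and multiplication by $x$ carries $(0,\ldots,y_i,\ldots,0)$ to $(0,\ldots,x_i y_i,\ldots,0)$, which is non-zero whenever $y_i\neq 0$ since $x_i$ is a unit; thus $x$ is $J$-regular and $J\in\mathcal{C}^+(I)$. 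If instead $x$ is a unit, then every non-trivial ideal $J$ works, because $1$ is $J$-regular.

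The only genuine obstacle is the bookkeeping that ensures the witnessing ideal $J$ is non-trivial, which is precisely why the converse splits according to whether the chosen element is a unit; this also reveals that the statement tacitly assumes $R$ is not a field, for a field has no non-trivial ideals, so $\mathcal{C}^+(I)=\varnothing$ holds vacuously while $I=R\not\subseteq{\rm Nil}(R)$. Outside this degenerate case the equivalence follows, and I expect the ``nilpotent is never regular'' step to be the conceptual heart, with the structure theorem making the converse routine.
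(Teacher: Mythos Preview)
Your proof is correct and follows essentially the same approach as the paper: the ``nilpotent is never $J$-regular'' argument for one direction is identical, and for the converse both you and the paper invoke the structure theorem $R\cong R_1\times\cdots\times R_n$ for Artinian rings. The only cosmetic difference is that the paper argues ideal-wise (if $\mathcal{C}^+(I)=\varnothing$ then each component $I_i$ of $I$ must be proper in $R_i$, hence $I\subseteq{\rm Nil}(R)$), whereas you argue element-wise by picking a non-nilpotent $x\in I$ and exhibiting a witness $J$; these are interchangeable, and your version is in fact more explicit about why the witness $J$ is non-trivial. Your observation about the degenerate case $R$ a field is a valid caveat that the paper's statement suppresses, though in context the lemma is only applied when $|{\rm Max}(R)|\geq 3$.
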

\begin{proof}
{First suppose that $I\subseteq {\rm Nil}(R)$. We show that $\mathcal{C}^+(I)=\varnothing$.
Suppose to the contrary $J\in\mathcal{C}^+(I)\neq\varnothing$.
Then $I$ contains a $J$-regular element, say $x$. Choose a non-zero
element $y\in J$. Then there exists a positive integer $n$ such
that $x^ny=0$ and $x^{n-1}y\neq 0$. Since $x^{n-1}y\in J$, we
deduce that $x$ is not $J$-regular, a contradiction. Conversely, suppose that $\mathcal{C}^+(I)=\varnothing$. By \cite[Theorem 8.7]{ati}, $R\cong R_1\times R_2\times\cdots\times R_n$, where $n$ is a positive integer and every $R_i$ is an Artinian local ring. Thus $I=I_1\times I_2\times\cdots\times I_n$, where every $I_i$ is an ideal of $R_i$. Since $\mathcal{C}^+(I)=\varnothing$, we deduce that every $I_i$ is a proper ideal of $R_i$. Hence  $I\subseteq {\rm Nil}(R)$.
}
\end{proof}

According to Corollary \ref{F3ecc3}, we only need to calculate the eccentricity of the vertices of $R$, when $n_F(R)\leq 2$. So from now, we focuss on a ring $R$ which contains at most two fields as its direct summands.
\begin{thm}\label{c-eccentricity}
Let  $R$ be an Artinian non-reduced ring and $\Gamma_{reg}(R)$ be a connected graph. If $n_F(R)\leq 2$ and $\mathfrak{c}=F_1\times I_2\times I_3\in\mathfrak{C}$ is a vertex of $\Gamma_{reg}(R)$, then the following statements hold:\\
$\rm(i)$ If $I_2=R_2$, then
$e(\mathfrak{c})=
  3$.\\
$\rm(ii)$ If $I_2$ is a proper ideal of $R_2$ and $I_3= (0)$, then
$$e(\mathfrak{c})=
  \begin{cases}
   3;       &  n_F(R)=2 \ {\rm and \ either}\ (R_3\ {\rm is\ not \ a \ field)\ or}\ ( R_3\ {\rm is\ a\ field\ and}\ I_2=(0))\\
   4;       & {\rm Otherwise}.
     \end{cases}$$
$\rm(iii)$ If $I_2=(0)$ and $I_3$ is a non-trivial ideal of $R_3$, then
$$e(\mathfrak{c})=
  \begin{cases}
   3;       &  {\rm either}\ n_F(R)=2\ {\rm or}\ I_3\nsubseteq {\rm Nil}(R_3) \\
   4;       & n_F(R)=1\ {\rm and}\ I_3\subseteq {\rm Nil}(R_3).
     \end{cases}$$
$\rm(iv)$ If $I_2$ is a non-trivial ideal of $R_2$ and $I_3=R_3$, then $e(\mathfrak{c})=3$.\\
$\rm(v)$ Let $I_2$ and $I_3$ be non-trivial ideals of $R_2$ and $R_3$, respectively. Then
\begin{itemize}
\item[\rm (a)] If $n_F(R)=1$, then $e(\mathfrak{c})=3$ if and only if $\mathcal{C}^+(I_3)\neq \varnothing$.
\item[\rm (b)] If $n_F(R)=2$, then $R_3\cong T_3\times T_4\times\cdots\times T_n$, where $T_3$ is a field and every $T_i$, $i\neq 4$, is an Artinian local ring which is not field. Moreover, $e(\mathfrak{c})\neq 3$ if and only if
    $I_3=(0)\times Q_4\times\cdots\times Q_n$, where every $Q_i$ is a non-trivial ideal of $T_i$.
\end{itemize}
\end{thm}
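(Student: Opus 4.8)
The plan is to read off $e(\mathfrak{c})=\max_{X}d(\mathfrak{c},X)$ directly from the distance formulas already proved, after first cutting the list of candidate vertices down to a manageable set. Every vertex lies in $\mathfrak{A}\cup\mathfrak{C}\cup\mathfrak{W}$, and by Proposition \ref{distancesimple}(4),(5) together with Proposition \ref{distanceccww}(1) we have $d(\mathfrak{c},\mathfrak{a}),d(\mathfrak{c},\mathfrak{b})\le 2$ and $d(\mathfrak{c},\mathfrak{c}')\le 2$ for every $\mathfrak{c}'\in\mathfrak{C}$. Since Lemma \ref{eccentricity3} gives $e(\mathfrak{c})\ge 3$, these vertices can never realize the eccentricity, so
\[
e(\mathfrak{c})=\max\Bigl(3,\,d(\mathfrak{c},\mathfrak{u}),\,d(\mathfrak{c},\mathfrak{v}),\,d(\mathfrak{c},\mathfrak{d}),\,\max_{\mathfrak{w}\in\mathfrak{W}}d(\mathfrak{c},\mathfrak{w})\Bigr).
\]
The three individual distances come straight out of Proposition \ref{distancec}(1)--(3), so the real content is to control $\max_{\mathfrak{w}}d(\mathfrak{c},\mathfrak{w})$ via Proposition \ref{distancecandw}, and to convert the emptiness conditions on the sets $\mathcal{C}^{\pm}$ into ring data using Lemma \ref{reg2} ($\mathcal{C}^{+}(I_3)=\varnothing\iff I_3\subseteq{\rm Nil}(R_3)$) and Lemma \ref{k1C+-} (if $n_F\ge 1$ then $\mathcal{C}^{+}\cup\mathcal{C}^{-}\ne\varnothing$). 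The natural extremal vertex to keep in mind is the complement $\mathfrak{c}^c=(0)\times I_2^c\times I_3^c$, which lies in $\{\mathfrak{u},\mathfrak{v},\mathfrak{d}\}\cup\mathfrak{W}$.

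For (i) and (iv) I would dispatch the $\mathfrak{w}$-distances with a single intermediate vertex. If $I_2=R_2$ then $(0,1,0)\in\mathfrak{c}$ is $\mathfrak{u}$-regular, so $\mathfrak{c}\longrightarrow\mathfrak{u}$; combined with $d(\mathfrak{u},\mathfrak{w})\le 2$ from Proposition \ref{distancesimple}(6) the triangle inequality yields $d(\mathfrak{c},\mathfrak{w})\le 3$, and with $d(\mathfrak{c},\mathfrak{u})=1$, $d(\mathfrak{c},\mathfrak{d})=2$, $d(\mathfrak{c},\mathfrak{v})\le 3$ this forces $e(\mathfrak{c})=3$. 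Case (iv) is identical with $\mathfrak{v}$ in place of $\mathfrak{u}$, using $(0,0,1)\in\mathfrak{c}$ and Proposition \ref{distancesimple}(7). For (ii) and (iii) one of $I_2,I_3$ is trivial: here $d(\mathfrak{c},\mathfrak{d})$ is read off Proposition \ref{distancec}(3) and the only remaining question is whether some $\mathfrak{w}$ sits at distance $4$. I would show this happens precisely when $n_F(R)=1$ together with the stated nilpotency/field condition, the point being that $n_F(R)=2$ makes $\mathcal{C}^{+}(I_3)\cup\mathcal{C}^{-}(I_3)\ne\varnothing$ (Lemma \ref{k1C+-} applied to $R_3$, which is a non-field ring with $n_F(R_3)=1$), collapsing the length-$4$ detours to length $3$.

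The substantive case is (v), where $I_2$ and $I_3$ are both non-trivial, so $\mathfrak{c}^c=(0)\times I_2\times I_3\in\mathfrak{W}$ has two non-trivial middle entries and the distance-$5$ clause of Proposition \ref{distancecandw} genuinely enters. For (a), with $n_F(R)=1$: if $\mathcal{C}^{+}(I_3)\ne\varnothing$, pick $J_3\in\mathcal{C}^{+}(I_3)$, note $\mathfrak{c}\longrightarrow(0)\times(0)\times J_3\in\mathfrak{W}$ and use $d(\,\cdot\,,\mathfrak{w})\le 2$ within $\mathfrak{W}$ (Proposition \ref{distanceccww}(2)) to get all $d(\mathfrak{c},\mathfrak{w})\le 3$, while Proposition \ref{distancec}(2),(3) give $d(\mathfrak{c},\mathfrak{v})=d(\mathfrak{c},\mathfrak{d})=2$; hence $e(\mathfrak{c})=3$. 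Conversely, if $\mathcal{C}^{+}(I_3)=\varnothing$ (equivalently $I_3\subseteq{\rm Nil}(R_3)$), I would rule out every length-$3$ alternating path from $\mathfrak{c}$ to $\mathfrak{c}^c$ by the product-adjacency bookkeeping of Remark \ref{productadjacency}, giving $d(\mathfrak{c},\mathfrak{c}^c)\ge 4$ and $e(\mathfrak{c})\ne 3$. For (b) I would first split off the unique field summand $T_3$ of $R_3$ via \cite[Theorem 8.7]{ati}; since $n_F(R)=2$ excludes distance $5$, the task becomes deciding for which $I_3$ some $\mathfrak{w}$ lies at distance $4$, and a path-counting argument (which $\mathfrak{w}$ block all length-$3$ closures) should isolate the single obstruction $I_3=(0)\times Q_4\times\cdots\times Q_n$ with every $Q_i$ non-trivial.

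The main obstacle throughout is the $3$-or-$4$ ambiguity left open by Proposition \ref{distancecandw}: that proposition does not decide, for a fixed $\mathfrak{w}$, whether $d(\mathfrak{c},\mathfrak{w})$ is $3$ or $4$, yet the eccentricity turns exactly on this. Resolving it means, for each configuration of $(I_2,I_3)$ and each value of $n_F(R)$, either exhibiting a concrete length-$3$ alternating path through suitable regular elements and Remark \ref{productadjacency}, or proving no such path exists by the same product-adjacency analysis; and in (v)(b) one must further maximize this over all $\mathfrak{w}\in\mathfrak{W}$, which is where the explicit shape of $I_3$ relative to the decomposition $R_3\cong T_3\times\cdots\times T_n$ is forced.
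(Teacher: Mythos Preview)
Your overall architecture matches the paper's: reduce to $\{\mathfrak{u},\mathfrak{v},\mathfrak{d}\}\cup\mathfrak{W}$ via Propositions \ref{distancesimple}--\ref{distanceccww} and Lemma \ref{eccentricity3}, then resolve the $3$-vs-$4$ question for $d(\mathfrak{c},\mathfrak{w})$ by explicit alternating paths or by Remark \ref{productadjacency} obstructions. Your treatment of (i), (iv) and (v)(a) is essentially what the paper does (the paper writes out the length-$3$ paths rather than invoking the triangle inequality through $\mathfrak{u}$, $\mathfrak{v}$, or a point of $\mathfrak{W}$, but it is the same path).

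There is, however, a genuine gap in your handling of (ii). You assert that ``$n_F(R)=2$ makes $\mathcal{C}^{+}(I_3)\cup\mathcal{C}^{-}(I_3)\ne\varnothing$ (Lemma \ref{k1C+-} applied to $R_3$, which is a non-field ring with $n_F(R_3)=1$), collapsing the length-$4$ detours to length $3$.'' This fails on two counts. First, in (ii) we have $I_3=(0)$, so $\mathcal{C}^-(I_3)$ is automatically all non-trivial ideals of $R_3$ and the union is nonempty regardless of $n_F(R)$; the condition does not separate the cases. Second, and more seriously, when $n_F(R)=2$ it can happen that $R_3$ itself is the field summand, so Lemma \ref{k1C+-} does not even apply to $R_3$. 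In exactly that situation, with $I_2$ a non-trivial ideal of $R_2$, the theorem asserts $e(\mathfrak{c})=4$, not $3$: the paper exhibits $\mathfrak{w}=(0)\times I_2\times R_3\in\mathfrak{W}$ and checks directly that no length-$3$ alternating path joins $\mathfrak{c}=F_1\times I_2\times(0)$ to this $\mathfrak{w}$. Your heuristic would wrongly predict eccentricity $3$ here. More generally, the quantity $\mathcal{C}^{+}(I_3)\cup\mathcal{C}^{-}(I_3)$ governs $d(\mathfrak{c},\mathfrak{d})$ through Proposition \ref{distancec}(3), but it does not by itself bound $d(\mathfrak{c},\mathfrak{w})$ for arbitrary $\mathfrak{w}\in\mathfrak{W}$; the paper handles those distances in (ii) and (iii) by decomposing $R_3\cong T_3\times T_4\times\cdots\times T_n$ and building concrete paths that depend on the shape of $K_3$, much as you anticipate only for (v)(b). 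You will need that finer analysis already in (ii) and (iii).
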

\begin{proof}
{(i) Let $\mathfrak{c}=F_1\times R_2\times I_3$, where $I_3$ is a non-trivial ideal of $R_3$. If $x\in \{\mathfrak{a},\mathfrak{b},\mathfrak{d},\mathfrak{u},\mathfrak{v}\}\cup\mathcal{C}$, then Propositions \ref{distancesimple}, \ref{distanceccww} and \ref{distancec} imply that
$d(\mathfrak{c},x)\leq 3$. Also, the existence of the path
$$\mathfrak{c}=F_1\times R_2\times I_3\longrightarrow (0)\times R_2\times (0)\longleftarrow (0)\times R_2\times R_3 \longrightarrow (0)\times K_2\times K_3=\mathfrak{w}$$ shows that $d(\mathfrak{c},\mathfrak{w})\leq 3$, for every vertex $\mathfrak{w}\in\mathfrak{W}$. Thus by Lemma \ref{eccentricity3}, $e(\mathfrak{c})=3$.\\
(ii) Let $\mathfrak{c}=F_1\times I_2\times (0)$, where $I_2$ is a proper ideal of $R_2$. Then by Propositions \ref{distancesimple}, \ref{distanceccww} and \ref{distancec}, we have $d(\mathfrak{c},x)\leq 3$, for every vertex $x\in\{\mathfrak{a},\mathfrak{b},\mathfrak{d},\mathfrak{u},\mathfrak{v}\}\cup\mathfrak{C}$. Therefore, from  Proposition \ref{distancecandw}, we deduce that $3\leq e(\mathfrak{c})\leq 4$. We follow the proof in the following cases:

Case 1. $n_F(R)=1$. In this case, $R\cong F_1\times R_2\times\cdots\times R_n$, where $n\geq 3$ and for every $2\leq i\leq n$, $R_i$ is an Artinian local ring which is not a field. We prove that $d(\mathfrak{c}, {\rm Nil}(R))=4$. Suppose to the contrary, $d(\mathfrak{c},{\rm Nil}(R))\neq 4$. By Proposition \ref{distancecandw}, $d(\mathfrak{c}, {\rm Nil}(R))=3$. Thus Remark \ref{transitivity} implies that there exist two vertices, say $J=J_1\times J_2\times J_3$ and $L=L_1\times L_2\times L_3$, such that one of the following paths exists:
\begin{eqnarray}\label{17}
\mathfrak{c}=F_1\times I_2\times (0)\longleftarrow J_1\times J_2\times J_3 \longrightarrow L_1\times L_2\times L_3 \longleftarrow {\rm Nil}(R)
\end{eqnarray}
\begin{eqnarray}\label{18}
\mathfrak{c}=F_1\times I_2\times (0)\longrightarrow J_1\times J_2\times J_3\longleftarrow L_1\times L_2\times L_3\longrightarrow {\rm Nil}(R)
\end{eqnarray}
By Lemma
\ref{reg2},  Path (\ref{17}) does not exist. So, we can assume that Path (\ref{18}) exists. Thus Remark \ref{productadjacency} implies that $J=F_1\times (0)\times (0)$ and $L=(0)\times R_2\times R_3$ which contradicts the adjacency of $J$ and $L$. Therefore, $e(\mathfrak{c})= 4$.

Case 2. $n_F(R)=2$, $R_3$ is a field and $I_2\neq (0)$. In this case, a similar proof to that of case 1  shows that $d(\mathfrak{c},(0)\times I_2\times R_3)=4$. Thus, in this case, $e(\mathfrak{c})= 4$.

Case 3. $n_F(R)=2$, $R_3$ is a field and $I_2=(0)$. Choose a vertex $\mathfrak{w}\in\mathfrak{W}$. Then there exists a non-trivial ideal $K_2$ of $R_2$ such that either $\mathfrak{w}=(0)\times K_2\times (0)$ or $\mathfrak{w}=(0)\times K_2\times R_3$. If $\mathfrak{w}=(0)\times K_2\times (0)$, then the vertex $F_1\times R_2\times (0)$ is adjacent to both $\mathfrak{c}$ and $\mathfrak{w}$ and so $d(\mathfrak{c},\mathfrak{w})=2$. Also, the existence of the path
\begin{eqnarray*}
\mathfrak{c}=F_1\times (0)\times (0)\longleftarrow F_1\times (0)\times R_3 \longrightarrow (0)\times (0)\times R_3 \longleftarrow (0)\times K_2\times R_3=\mathfrak{w}
\end{eqnarray*}
implies that $d(\mathfrak{c},\mathfrak{w})\leq 3$. Thus, by Lemma \ref{eccentricity3}, $e(\mathfrak{c})= 3$.

Case 4. $n_F(R)=2$ and $R_3$ is not a field. In this case, $R_3\cong T_3\times T_4\times\cdots\times T_n$, where every $T_i$, $i\neq 3$, is an Artinian local ring which is not field and $T_3$ is a field. So, every vertex $\mathfrak{w}\in \mathfrak{W}$ is of the form $(0)\times Q_2\times Q_3\times \cdots\times Q_n$. Now, in the following two subcases, we prove that $d(\mathfrak{c},\mathfrak{w})\leq3$:

Subcase 1. There exists $3\leq j\leq n$ such that $Q_j=T_j$. With no loss of generality, one can assume that $Q_3=T_3$ and so, by Remark \ref{productadjacency}, the path
\begin{eqnarray*}
\mathfrak{c}\longleftarrow F_1\times R_2\times T_3\times (0)\times\cdots\times (0) \longrightarrow (0)\times (0)\times T_3 \times (0)\times\cdots\times (0)\longleftarrow \mathfrak{w}
\end{eqnarray*}
exists and hence $d(\mathfrak{c},\mathfrak{w})\leq3$.

Subcase 2. For every $3\leq j\leq n$, $Q_j\neq T_j$. In this subcase, $Q_3=(0)$ and the existence of the path
\begin{eqnarray*}
\mathfrak{c}\longrightarrow F_1\times (0)\times (0)\times\cdots\times (0) \longleftarrow F_1\times R_2 \times (0)\times T_4\times\cdots\times T_n\longrightarrow \mathfrak{w}
\end{eqnarray*}
shows that $d(\mathfrak{c},\mathfrak{w})\leq3$.

Therefore, in this case, $e(\mathfrak{c})= 3$ and this completes the proof of (ii).\\
(iii) Let $\mathfrak{c}=F_1\times (0)\times I_3$, where $I_3$ is a non-trivial ideal of $R_3$. Then by Propositions \ref{distancesimple},\ref{distanceccww} and \ref{distancec}, we only need to check $d(\mathfrak{c},\mathfrak{w})$, where $\mathfrak{w}\in\mathfrak{W}$. Now, consider the following cases:

Case 1. $n_F(R)=1$ and $I_3\nsubseteq {\rm Nil}(R_3)$. In this case, Lemma \ref{reg2} implies that $\mathcal{C}^+(I_3)\neq \varnothing$. Choose $J_3\in\mathcal{C}^+(I_3)$. Then for every $\mathfrak{w}=(0)\times K_2\times K_3\in\mathfrak{W}$, the path
\begin{eqnarray*}
\mathfrak{c}=F_1\times (0)\times I_3\longrightarrow  (0)\times (0) \times J_3 \longleftarrow (0)\times R_2 \times R_3\longrightarrow \mathfrak{w}
\end{eqnarray*}
exists and so $d(\mathfrak{c},\mathfrak{w})\leq 3$, for every $\mathfrak{w}\in\mathfrak{W}$. Therefore, in this case, $e(\mathfrak{c})=3$

Case 2. $n_F(R)=1$ and $I_3\subseteq {\rm Nil}(R_3)$. Since $I_3\subseteq {\rm Nil}(R_3)$, Lemma \ref{reg2} implies that $\mathcal{C}^+(I_3)=\varnothing$.
By \cite[Theorem 8.7]{actamathhungar}, $R\cong F_1\times R_2\times\cdots\times R_n$, where $n\geq 3$ and for every $2\leq i\leq n$, $R_i$ is an Artinian local ring which is not a field. Thus by \cite[Theorem 2.1]{actamathhungar}, $\mathcal{C}^+({\rm Nil}(R_2\times R_3\times\cdots\times R_n))\cup\mathcal{C}^-({\rm Nil}(R_2\times R_3\times\cdots\times R_n))=\varnothing$. We prove that $d(\mathfrak{c}, {\rm Nil}(R))=4$. Suppose to the contrary, $d(\mathfrak{c},{\rm Nil}(R))\neq 4$. By Proposition \ref{distancecandw}, $d(\mathfrak{c}, {\rm Nil}(R))=3$. Thus Remark \ref{transitivity} implies that there exist two vertices, say $J=J_1\times J_2\times J_3$ and $L=L_1\times L_2\times L_3$, such that one of the following paths exists:
\begin{eqnarray}\label{19}
\mathfrak{c}=F_1\times (0)\times I_3\longleftarrow J_1\times J_2\times J_3 \longrightarrow L_1\times L_2\times L_3 \longleftarrow {\rm Nil}(R)
\end{eqnarray}
\begin{eqnarray}\label{20}
\mathfrak{c}=F_1\times (0)\times I_3\longrightarrow J_1\times J_2\times J_3\longleftarrow L_1\times L_2\times L_3\longrightarrow {\rm Nil}(R)
\end{eqnarray}
By Lemma
\ref{reg2},  Path (\ref{19}) does not exist. So, we can assume that Path (\ref{20}) exists. Thus Remark \ref{productadjacency} implies that $J=F_1\times (0)\times (0)$. So,  $L=F_1\times R_2\times R_3$, a contradiction. Thus $d(\mathfrak{c}, {\rm Nil}(R))=4$ and hence $e(\mathfrak{c})= 4$.

Case 3. $n_F(R)=2$ and $R_3$ is not a field. In this case, $R_3\cong T_3\times T_4\times\cdots\times T_n$, where every $T_i$, $i\neq 3$, is an Artinian local ring which is not field and $T_3$ is a field. So, every vertex $\mathfrak{w}\in \mathfrak{W}$ is of the form $(0)\times Q_2\times Q_3\times \cdots\times Q_n$. Now, in the following two subcases, we prove that $d(\mathfrak{c},\mathfrak{w})\leq3$:

Subcase 1. There exists $3\leq j\leq n$ such that $Q_j=T_j$. With no loss of generality, one can assume that $Q_3=T_3$ and so, by Remark \ref{productadjacency}, the path
\begin{eqnarray*}
\mathfrak{c}\longleftarrow F_1\times (0)\times T_3\times T_4\times\cdots\times T_n \longrightarrow (0)\times (0)\times T_3 \times (0)\times\cdots\times (0)\longleftarrow \mathfrak{w}
\end{eqnarray*}
exists and hence $d(\mathfrak{c},\mathfrak{w})\leq3$.

Subcase 2. For every $3\leq j\leq n$, $Q_j\neq T_j$. In this subcase, the existence of the path
\begin{eqnarray*}
\mathfrak{c}\longrightarrow F_1\times (0)\times (0)\times\cdots\times (0) \longleftarrow F_1\times  (0) \times T_3\times\cdots\times T_n\longrightarrow \mathfrak{w}
\end{eqnarray*}
shows that $d(\mathfrak{c},\mathfrak{w})\leq3$.

Therefore, in this case, Lemma \ref{eccentricity3} implies that $e(\mathfrak{c})= 3$.\\
(iv) Assume that $\mathfrak{c}= F_1\times I_2\times R_3$, where $I_2$ is a non-trivial ideal of $R_2$.  By Propositions \ref{distancesimple},\ref{distanceccww} and \ref{distancec}, we have $d(\mathfrak{c},x)\leq 3$, for every vertex $x\in\{\mathfrak{a},\mathfrak{b},\mathfrak{d},\mathfrak{u},\mathfrak{v}\}\cup\mathfrak{C}$. Also, for every vertex $\mathfrak{w}=(0)\times K_2\times K_3\in\mathfrak{W}$, the path
\begin{eqnarray*}
\mathfrak{c} \longrightarrow (0)\times (0)\times R_3 \longleftarrow (0)\times R_2 \times R_3 \longrightarrow \mathfrak{w}
\end{eqnarray*}
exists and hence by Lemma \ref{eccentricity3}, $e(\mathfrak{c})=3$.\\
(v) Let $\mathfrak{c}=F_1\times I_2\times I_3$, where $I_2$ and $I_3$ are non-trivial ideals of $R_2$ and $R_3$, respectively. Then by Propositions \ref{distancesimple},\ref{distanceccww} and \ref{distancec}, we have $d(\mathfrak{c},x)\leq 3$, for every vertex $x\in\{\mathfrak{a},\mathfrak{b},\mathfrak{u},\mathfrak{v}\}\cup\mathfrak{C}$. Hence by Lemma \ref{eccentricity3}, $e(\mathfrak{c})$ depends only on $d(\mathfrak{c},\mathfrak{d})$ and $d(\mathfrak{c},\mathfrak{w})$, where $\mathfrak{w}\in \mathfrak{W}$.  On the other hand,  Proposition \ref{distancec}(2) implies that $d(\mathfrak{c},\mathfrak{d})\leq 3$ if and only if $\mathcal{C}^+(I_3)\cup\mathcal{C}^-(I_3)\neq \varnothing$. Therefore, by Lemma \ref{eccentricity3}, $e(\mathfrak{c})=3$ if and only if $\mathcal{C}^+(I_3)\cup\mathcal{C}^-(I_3)\neq \varnothing$ and $d(\mathfrak{c},\mathfrak{w})\leq 3$, for every $\mathfrak{w}\in\mathfrak{W}$. To complete the proof, we consider the following three cases:

Case 1. $\mathcal{C}^+(I_3)\neq\varnothing$. In this case, choose $J_3\in\mathcal{C}^+(I_3)$. Then for every $\mathfrak{w}=(0)\times K_2\times K_3\in\mathfrak{W}$, the path
\begin{eqnarray*}
\mathfrak{c}=F_1\times I_2\times I_3\longrightarrow (0)\times (0)\times J_3\longleftarrow (0)\times R_2\times R_3\longrightarrow (0)\times K_2\times K_3=\mathfrak{w}
\end{eqnarray*}
exists and so $e(\mathfrak{c})=3$.

Case 2. $\mathcal{C}^+(I_3)=\varnothing$ and $n_F(R)=1$. In this case, we prove that $d(\mathfrak{c},(0)\times I_2\times {\rm Nil}(R_3))\geq 4$. Suppose to the contrary, $d(\mathfrak{c},\mathfrak{w})\leq 3$. Then Remark \ref{transitivity} implies that there are two vertices, say $J=J_1\times J_2\times J_3$ and $L=L_1\times L_2\times L_3$, such that one of the following paths exists:
\begin{eqnarray}\label{21}
\mathfrak{c}=F_1\times I_2\times I_3\longleftarrow J_1\times J_2\times J_3 \longrightarrow L_1\times L_2\times L_3 \longleftarrow (0)\times I_2\times {\rm Nil}(R_3)
\end{eqnarray}
\begin{eqnarray}\label{22}
\mathfrak{c}=F_1\times I_2\times I_3\longrightarrow J_1\times J_2\times J_3\longleftarrow L_1\times L_2\times L_3\longrightarrow (0)\times I_2\times {\rm Nil}(R_3)
\end{eqnarray}
By Lemma
\ref{reg2},  Path (\ref{21}) does not exist. So, we can assume that Path (\ref{22}) exists. Thus Remark \ref{productadjacency} implies that $J=F_1\times(0)\times(0)$ and $L=(0)\times R_2\times R_3$ which contradicts the adjacency of $J$ and $L$. Therefore, $e(\mathfrak{c})= 4$.

Case 3. $\mathcal{C}^+(I_3)=\varnothing$ and $n_F(R)=2$. In this case, Lemma \ref{k1C+-} implies that $\mathcal{C}^-(I_3)\neq\varnothing$. Since $I_3$ is a non-trivial ideal of $R_3$ and $n_F(R)=2$, we deduce that $R_3\cong T_3\times T_4\times\cdots\times T_n$, where $T_3$ is a field and for every $4\leq i\leq n$, $T_i$ is an Artinian local ring which is not field. So, $I_3=(0)\times Q_4\times\cdots\times Q_n$, where every $Q_i$ is a proper ideal of $T_i$. To complete the proof, we consider the following two subcases:

Subcase 1. Every $Q_j$, $4\leq j\leq n$, is a non-trivial ideal of $T_j$. In this subcase, we prove that $d(\mathfrak{c},(0)\times I_2\times I_3^c)\geq 4$ (Note that $I_3^c=T_3\times Q_4\times\cdots\times Q_n$). Suppose to the contrary, $d(\mathfrak{c},(0)\times I_2\times I_3^c)\leq 3$. Then by Remark \ref{transitivity}, there are two vertices, say $J=J_1\times J_2\times J_3$ and $L=L_1\times L_2\times L_3$, such that one of the following paths exists:
\begin{eqnarray}\label{23}
\mathfrak{c}=F_1\times I_2\times I_3\longleftarrow J_1\times J_2\times J_3 \longrightarrow L_1\times L_2\times L_3 \longleftarrow (0)\times I_2\times I_3^c
\end{eqnarray}
\begin{eqnarray}\label{24}
\mathfrak{c}=F_1\times I_2\times I_3\longrightarrow J_1\times J_2\times J_3\longleftarrow L_1\times L_2\times L_3\longrightarrow (0)\times I_2\times I_3^c
\end{eqnarray}
If Path (\ref{23}) exists, then by Remark \ref{productadjacency},  $J=F_1\times R_2\times(0)\times T_4\times\cdots\times T_n$ and $L=(0)\times(0)\times T_3\times (0)\times\cdots\times (0)$ and this contradicts the adjacency of $J$ and $L$. Also, since $\mathcal{C}^+(I_3)=\varnothing$, by Remark \ref{productadjacency}, the existence of the Path (\ref{24}) implies that   $J=F_1\times(0)\times (0)$ and $L=(0)\times R_2\times R_3$, a contradiction. Hence $e(\mathfrak{c})\geq 4$

Subcase 2. There exists $4\leq j\leq n$ such that $Q_j=(0)$. With no loss of generality, one can assume that $Q_4=(0)$. We show that for every $\mathfrak{w}=(0)\times K_2\times K_3\in\mathfrak{W}$, $d(\mathfrak{c},\mathfrak{w})\leq 3$. From Lemma \ref{k1C+-}, we deduce that $\mathcal{C}^+(K_3)\cup\mathcal{C}^-(K_3)\neq\varnothing$. If $\mathcal{C}^-(K_3)\neq\varnothing$, then there exists a non-trivial ideal $L_3\in\mathcal{C}^-(K_3)$. Thus the path
\begin{eqnarray*}
\mathfrak{c}=F_1\times I_2\times I_3\longrightarrow F_1\times (0)\times (0)\longleftarrow F_1\times R_2\times L_3\longrightarrow (0)\times K_2\times K_3=\mathfrak{w}
\end{eqnarray*}
exists and so there is no thing to prove. Thus we can suppose that $\mathcal{C}^-(K_3)=\varnothing$ and so $K_3=T_3\times Q_4\times\cdots\times Q_n$, where $Q_i\neq (0)$, for every $4\leq i\leq n$. Setting $J_3=T_3\times (0)\times T_5\times \cdots\times T_n$ and $L_3=T_3\times (0)\times \cdots\times (0)$, Remark \ref{productadjacency} implies that the path
\begin{eqnarray*}
\mathfrak{c}=F_1\times I_2\times I_3\longleftarrow F_1\times R_2\times J_3\longrightarrow (0)\times (0)\times L_3\longleftarrow (0)\times K_2\times K_3=\mathfrak{w}
\end{eqnarray*}
exists and so $d(\mathfrak{c},\mathfrak{w})\leq 3$. Therefore, in this subcase, $e(\mathfrak{c})=3$. So, we are done.
}
\end{proof}

%\begin{remark}\label{comlementadjacency}
Let $R$ be an Artinian ring and $I$ and $J$ be two non-trivial ideals of $R$. Then it is clear that
 $I\in\mathcal{C}^+(J)$ if and only if $I^c\in\mathcal{C}^-(J^c)$. Moreover, $I\longrightarrow J$ is an arc of $\overrightarrow{\Gamma_{reg}}(R)$ if and only if $J^c\longrightarrow I^c$ of $\overrightarrow{\Gamma_{reg}}(R)$.
Thus $d(I,J)=d(I^c,J^c)$ and hence $e(I)=e(I^c)$. Moreover, we have $I\in\mathfrak{C}$ if and only if $I^c\in\mathfrak{W}$. Thus using this facts and applying the similar proof to that of Theorem \ref{c-eccentricity}, one
can prove the following theorem.
%\end{remark}

\begin{thm}\label{w-eccentricity}
Let  $R$ be an Artinian non-reduced ring such that $\Gamma_{reg}(R)$ be a connected graph. If $n_F(R)\leq 2$ and $\mathfrak{w}=(0)\times K_2\times K_3\in\mathfrak{W}$ is a vertex of $\Gamma_{reg}(R)$, then the following statements hold:\\
$\rm(i)$ If $K_2=(0)$, then
$e(\mathfrak{w})=
  3$.\\
$\rm(ii)$ If $K_2$ is a non-trivial ideal of  $R_3$ and $K_3= R_3$, then
$$e(\mathfrak{w})=
  \begin{cases}
   3;       &  n_F(R)=2\ {\rm and}\ R_3\ {\rm is\ not \ a\ field} \\
   4;       & {\rm Otherwise}.
     \end{cases}$$
$\rm(iii)$ If $K_2=R_2$, then
$$e(\mathfrak{w})=
  \begin{cases}
   3;       &  {\rm Either}\ n_F(R)=2\ {\rm or}\ \mathcal{C}^-(K_3)\neq\varnothing\\
   4;       & {\rm Otherwise}.
     \end{cases}$$
$\rm(iv)$ If $K_2$ is a non-trivial ideal of $R_2$ and $K_3=(0)$, then $e(\mathfrak{w})=3$.\\
$\rm(v)$ Let $K_2$ and $K_3$ be non-trivial ideals of $R_2$ and $R_3$, respectively. Then
\begin{itemize}
\item[\rm (a)] If $n_F(R)=1$, then $e(\mathfrak{w})=3$ if and only if $\mathcal{C}^-(K_3)\neq\varnothing$.
\item[\rm (b)] If $n_F(R)=2$, then $R_3\cong T_3\times T_4\times\cdots\times T_n$, where $T_3$ is a field and every $T_i$, $i\neq 4$, is an Artinian local ring which is not field. Moreover, $e(\mathfrak{w})\neq 3$ if and only if
    $K_3=T_3\times Q_4\times\cdots\times Q_n$, where every $Q_i$ is a non-trivial ideal of $T_i$.
\end{itemize}
\end{thm}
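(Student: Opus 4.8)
The plan is to deduce the entire statement from Theorem~\ref{c-eccentricity} by means of the complementation duality recorded in the paragraph immediately preceding the theorem. Recall that the map $I\mapsto I^c$ is an involution on the set of non-trivial ideals of $R$ which reverses every arc of $\overrightarrow{\Gamma_{reg}}(R)$ (so that $I\longrightarrow J$ is an arc if and only if $J^c\longrightarrow I^c$ is), whence it preserves distances and eccentricities: $e(I)=e(I^c)$. Moreover it carries $\mathfrak{W}$ onto $\mathfrak{C}$ and interchanges $\mathcal{C}^+$ and $\mathcal{C}^-$, in the sense that $J\in\mathcal{C}^+(I)$ if and only if $J^c\in\mathcal{C}^-(I^c)$; in particular, since complementation is a bijection, $\mathcal{C}^+(I)\neq\varnothing$ if and only if $\mathcal{C}^-(I^c)\neq\varnothing$. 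Because $n_F(R)$, the decomposition $R\cong F_1\times R_2\times R_3$, and the question of whether $R_3$ is a field are features of $R$ left untouched by complementation, all the ring-theoretic hypotheses of the two theorems coincide.

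First I would fix $\mathfrak{w}=(0)\times K_2\times K_3\in\mathfrak{W}$ and pass to its complement
$$\mathfrak{w}^c=F_1\times K_2^c\times K_3^c,$$
which lies in $\mathfrak{C}$ and satisfies $e(\mathfrak{w})=e(\mathfrak{w}^c)$. The proof then reduces to matching the five parts of the present theorem with the five parts of Theorem~\ref{c-eccentricity} applied to $\mathfrak{w}^c=F_1\times I_2\times I_3$, where $I_2=K_2^c$ and $I_3=K_3^c$. Since $K_2=(0)\Leftrightarrow I_2=R_2$, $K_2=R_2\Leftrightarrow I_2=(0)$, and ``$K_2$ non-trivial'' $\Leftrightarrow$ ``$I_2$ non-trivial'' (and likewise for $K_3,I_3$), the dictionary reads: part~(i) here corresponds to Theorem~\ref{c-eccentricity}(i) (giving $e=3$); part~(iv) to (iv) (again $e=3$); part~(iii) to (iii); part~(ii) to the sub-case of (ii) in which $I_2$ is non-trivial, so that the alternative ``$R_3$ a field and $I_2=(0)$'' cannot occur and (ii) collapses to exactly the formula displayed here; and part~(v) to (v).

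The only real content is to verify that the \emph{conditions} of Theorem~\ref{c-eccentricity} translate to those stated here, and this is precisely where the $\mathcal{C}^+\leftrightarrow\mathcal{C}^-$ interchange is used. For parts~(iii) and (v)(a), Lemma~\ref{reg2} rewrites the condition $I_3\nsubseteq{\rm Nil}(R_3)$ as $\mathcal{C}^+(I_3)\neq\varnothing$; replacing $I_3$ by $K_3^c$ and applying the duality turns this into $\mathcal{C}^-(K_3)\neq\varnothing$, which is exactly the dichotomy recorded in (iii) and (v)(a). For part~(v)(b) one uses that complementation acts slot-wise on the local decomposition $R_3\cong T_3\times T_4\times\cdots\times T_n$: an ideal $I_3=(0)\times Q_4\times\cdots\times Q_n$ with every $Q_i$ non-trivial has complement $I_3^c=T_3\times Q_4\times\cdots\times Q_n$, since $(0)^c=T_3$ and $Q_i^c=Q_i$ for non-trivial $Q_i$. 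Thus the exceptional shape isolated for $\mathfrak{c}$ in (v)(b) corresponds precisely to the exceptional shape $K_3=T_3\times Q_4\times\cdots\times Q_n$ demanded here.

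I expect the main obstacle to be bookkeeping rather than mathematics: one must confirm that each ``$e=3$ versus $e=4$'' split survives the translation with no off-by-one in the parameter $n_F(R)$ and no confusion between ``trivial'', ``proper'', and ``non-trivial'' on the two sides of the correspondence. Once the dictionary is checked case by case, each equality $e(\mathfrak{w})=e(\mathfrak{w}^c)$ converts the relevant clause of Theorem~\ref{c-eccentricity} into the asserted clause for $\mathfrak{w}$, and the theorem follows.
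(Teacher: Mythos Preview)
Your proposal is correct and follows exactly the route the paper indicates: the paper's ``proof'' is precisely the complementation paragraph you invoke, observing that $e(I)=e(I^c)$, that $\mathfrak{W}^c=\mathfrak{C}$, and that $\mathcal{C}^+$ and $\mathcal{C}^-$ are interchanged, and then declaring that the result follows by the argument of Theorem~\ref{c-eccentricity}. Your case-by-case dictionary (including the collapse in part~(ii) because $I_2=K_2^c$ is non-trivial, and the slot-wise translation of $I_3=(0)\times Q_4\times\cdots\times Q_n$ into $K_3=T_3\times Q_4\times\cdots\times Q_n$ in part~(v)(b)) is accurate and in fact makes the deduction more explicit than the paper does.
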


%The following table summarizes all of the possible values about the distances between vertices of $\Gamma_{reg}(R)$ and their eccentricities, when
%$R$ is an Artinian non-reduced ring and $\Gamma_{reg}(R)$ is a connected graph.

%\begin{tabular}{ | l | r | r | r |r|r | r | r |r| }
%\hline
%Vertex & $\mathfrak{a}$ & $\mathfrak{b}$ &$\mathfrak{c}$ &$\mathfrak{d}$ &$\mathfrak{u}$ &$\mathfrak{v}$ &$\mathfrak{w}$\\ \hline
%$\mathfrak{a}$ &0&2&1,2&2&1&3&1,2,3\\ \hline
%$\mathfrak{b}$ &2&0&1,2&2&3&1&1,2,3\\ \hline
%$\mathfrak{c}'$ &1,2&1,2&1,2&2,3,4&1,2,3&1,2,3&1,2,3,4,5\\ \hline
%$\mathfrak{d}$ &2&2&2,3,4&0&1&1&1\\ \hline
%$\mathfrak{u}$ &1&3&1,2,3&1&0&2&1,2\\ \hline
%$\mathfrak{v}$ &3&1&1,2,3&1&2&0&1,2\\ \hline
%$\mathfrak{w}'$ &1,2,3&1,2,3&1,2,3,4,5& 1&1,2&1,2&1,2\\ \hline
%Eccentricity&3&3&3,4,5&4&3&3&3,4,5\\ \hline
%\end{tabular}\\

Finally, we prove the main theorem of this paper.
%which shows that the radius of $\Gamma_{reg}(R)$ is $3$.
\begin{thm}\label{radius}
If $R$ is an Artinian ring and $\Gamma_{reg}(R)$ is connected, then $r(\Gamma_{reg}(R))=3$.
\end{thm}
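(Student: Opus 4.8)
The plan is to obtain the theorem as an essentially immediate assembly of the eccentricity and diameter results already proved, by treating the lower and upper bounds on the radius separately.

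First I would dispose of the lower bound. By Lemma \ref{eccentricity3}, every vertex $I$ of $\Gamma_{reg}(R)$ satisfies $e(I)\geq 3$; since the radius is the minimum of the eccentricities over all vertices, this gives $r(\Gamma_{reg}(R))\geq 3$ with no further argument.

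For the upper bound it suffices to produce one vertex of eccentricity exactly $3$, and here I would split on the value of $n_F(R)$. If $n_F(R)\geq 3$, then Lemma \ref{diam345}(iii) yields $\mathrm{diam}(\Gamma_{reg}(R))=3$; as every eccentricity is bounded above by the diameter, this forces $e(I)\leq 3$ for all $I$ and hence $r(\Gamma_{reg}(R))\leq 3$. This single case already covers every reduced ring: a reduced Artinian ring is a finite product of fields, and connectedness of $\Gamma_{reg}(R)$ forces at least three factors, so $n_F(R)\geq 3$ automatically. In the remaining case $n_F(R)\leq 2$ the ring $R$ must therefore be non-reduced, so the decomposition $R\cong F_1\times R_2\times R_3$ recorded before Proposition \ref{distancesimple} is available and the vertex $\mathfrak{a}=F_1\times R_2\times (0)$ exists; Theorem \ref{abduv-eccentricity}(i) then gives $e(\mathfrak{a})=3$ irrespective of whether $n_F(R)$ equals $1$ or $2$, so again $r(\Gamma_{reg}(R))\leq 3$.

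Combining the two bounds gives $r(\Gamma_{reg}(R))=3$. I do not expect a genuine obstacle, since all the heavy lifting is contained in the prior lemmas and theorems; the only point requiring a little care is that the explicit eccentricity computations of Theorems \ref{abduv-eccentricity}, \ref{c-eccentricity} and \ref{w-eccentricity} were stated only for non-reduced rings, so for reduced rings one must instead route the argument through the diameter lemma after observing that reducedness together with connectedness forces $n_F(R)\geq 3$.
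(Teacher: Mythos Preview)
Your argument is correct and a bit leaner than the paper's. The paper splits on reduced versus non-reduced: in the reduced case it argues directly, introducing the index sets $\Delta_I=\{k:\ I_k=F_k\}$ and showing by a two-case analysis (on whether $\Delta_I\cap\Delta_J$ is empty) that $d(I,J)\leq 3$ for every pair, then invoking Lemma~\ref{eccentricity3}; in the non-reduced case it cites the full battery of Theorems~\ref{abduv-eccentricity}, \ref{c-eccentricity}, \ref{w-eccentricity} together with Corollary~\ref{F3ecc3}. You instead split on $n_F(R)$: for $n_F(R)\geq 3$ you appeal to Lemma~\ref{diam345}(iii) to get $\mathrm{diam}=3$ (which subsumes the reduced case, since a reduced Artinian ring with connected $\Gamma_{reg}$ is a product of at least three fields), and for $n_F(R)\leq 2$ you only exhibit the single witness $\mathfrak{a}$ with $e(\mathfrak{a})=3$ via Theorem~\ref{abduv-eccentricity}(i). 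Your route avoids reproving the reduced-case diameter bound from scratch and avoids the vertex-by-vertex eccentricity classification of Theorems~\ref{c-eccentricity} and~\ref{w-eccentricity}, which are needed for determining the center (Corollary~\ref{center}) but are overkill for the radius alone; the paper's approach, on the other hand, is self-contained for the reduced case and does not lean on Lemma~\ref{diam345}, whose proof as written is phrased with a non-field local factor $R_2$ in mind.
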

\begin{proof}
{Let $\Gamma_{reg}(R)$ be a connected  graph. Then \cite[Theorem 2.3]{actamathhungar} implies that $|{\rm Max}(R)|\geq 3$ and $R$ contains a field as its
direct summand. Since $R$ has at least three maximal ideals, \cite[Theorem 8.7]{ati} implies that $R\cong F_1\times R_2\times R_3$, where $F_1$ is a field and $R_2$ and $R_3$ are Artinian rings. First suppose that $R$ is reduced. Then $R\cong F_1\times \cdots\times F_n$, where $n\geq 3$ and every $F_i$ is a field. For every ideal $I=I_1\times \cdots\times I_n$ of $R$, define $$\Delta_I=\{k\,|\ 1\leq k\leq n\,\,
{\rm and} \,\,I_k=F_k\}$$ and $$\Omega=\{\Delta_I\,|\ I\ {\rm is \ a\ non-trivial\ ideal\ of} \ R\}.$$
Clearly, $\Delta_I=\Delta_J$ if and only if $I=J$. Thus there is a one to one correspondence between $\Omega$ and the set of proper and
non-empty subsets of $\{1,\ldots,n\}$. We claim that $d(I,J)\leq 3$, for every two distinct vertices $I$ and $J$ of $\Gamma_{reg}(R)$. To see this, we consider the following two cases:

Case 1. $\Delta_I\cap\Delta_J=\varnothing$. Since $n\geq 3$, by pigeon-hole principle and with no loss of generality, we can assume that $|\Delta_I|\geq |\Delta_J|$, $|\Delta_I|\geq 2$ and $1\in \Delta_I$. Let $I_1$ and $I_2$ be two vertices of $\overrightarrow{\Gamma_{reg}}(R)$ such that $\Delta_{I_1}=\{1\}$ and $\Delta_{I_2}=A_J\cup\{1\}$. Then there is the path
$I\longrightarrow I_1\longleftarrow I_2\longrightarrow J$ in $\overrightarrow{\Gamma_{reg}}(R)$ and hence $d(I,J)\leq 3$.

Case 2. $\Delta_I\cap\Delta_J\neq\varnothing$.
If either $\Delta_I\subset\Delta_J$ or $\Delta_J\subset\Delta_I$, then $I$ and $J$ are adjacent. So, we can assume that neither $\Delta_I\nsubseteq\Delta_J$ nor $\Delta_J\nsubseteq\Delta_I$. Choose $i\in \Delta_I\cap\Delta_J$. Then it is clear that $I_i=(0)\times\cdots\times F_i\times\cdots\times (0)$ is adjacent to both $I$ and $J$ and so $d(I,J)=2$.

So, the claim is proved. Thus from Lemma \ref{eccentricity3}, we deduce that $e(I)=3$. Therefore, in any case, $r(\Gamma_{reg}(R))=3$.
Now, suppose that $R$ is non-reduced. Then the assertion follows from Theorems \ref{abduv-eccentricity}, \ref{c-eccentricity}, \ref{w-eccentricity} and Corollary \ref{F3ecc3}.
}
\end{proof}

Using Theorems \ref{abduv-eccentricity}, \ref{c-eccentricity} and \ref{w-eccentricity}, we have the following immediate corollary in which  the center of $\Gamma_{reg}(R)$ is determined.

\begin{cor}\label{center}
Let  $R$ be an Artinian non-reduced ring and $\Gamma_{reg}(R)$ be a connected graph. Then the following statements hold:
\begin{enumerate}
\item[\rm(i)] If $n_F(R)=1$ and
$X=\{F_1\times I_2\times I_3|\ {\rm either}\ I_2=R_2\ {\rm or}\ I_3\nsubseteq {\rm Nil}(R_3)\}$, then the center of $\Gamma_{reg}(R)$ equals $X\cup X^c$.
\item[\rm(ii)] If $n_F(R)=2$ and $|{\rm Max}(R)|=3$, then every vertex of $\Gamma_{reg}(R)$ is centeral.
\item[\rm(iii)] If $n_F(R)=2$ and $|{\rm Max}(R)|\geq 4$, then $R\cong F_1\times T_2\times F_3\times T_4\times \cdots\times T_n$, where $F_1$ and $F_3$ are fields and every $T_i$ is an Artinian local ring which is not field. Moreover, the vertex $I$ is central if and only if neither
$I=F_1\times I_2\times (0) \times I_4\times\cdots\times I_n$ nor $I=(0)\times I_2\times F_3 \times I_4\times\cdots\times I_n$, for every non-trivial ideals $I_i$ of $T_i$.
\item[\rm(iv)] If $n_F(R)\geq 3$, then every vertex of $\Gamma_{reg}(R)$ is central.
\end{enumerate}
\end{cor}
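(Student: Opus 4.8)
The plan is to reduce the whole corollary to the eccentricity computations already established. By Theorem~\ref{radius} the radius of $\Gamma_{reg}(R)$ equals $3$, and by Lemma~\ref{eccentricity3} every vertex satisfies $e(I)\geq 3$; hence a vertex $I$ is central if and only if $e(I)=3$. Thus each of the four parts is obtained by scanning Theorems~\ref{abduv-eccentricity}, \ref{c-eccentricity} and \ref{w-eccentricity} (together with Corollary~\ref{F3ecc3}) and recording exactly which vertices attain eccentricity $3$ in the relevant regime of $(n_F(R),|{\rm Max}(R)|)$. Part (iv) is immediate: if $n_F(R)\geq 3$, Corollary~\ref{F3ecc3} already gives $e(I)=3$ for every vertex $I$, so every vertex is central.

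For part (i) I take $n_F(R)=1$ and split the vertices by their first coordinate, which is either $(0)$ or $F_1$ since $F_1$ is a field. The vertices with first coordinate $F_1$ are $\mathfrak{a},\mathfrak{b}$ together with $\mathfrak{C}$; I run through the cases of Theorem~\ref{c-eccentricity} with $n_F(R)=1$ and record that $e(\mathfrak{c})=3$ precisely when $I_2=R_2$ (case (i)) or $I_3\nsubseteq{\rm Nil}(R_3)$ (cases (iii), (iv), and (v)(a), the last via Lemma~\ref{reg2}), while case (ii) and the remaining part of (v) give eccentricity $4$. This is exactly the set $X$. For the vertices with first coordinate $(0)$ I invoke the complement symmetry stated just before Theorem~\ref{w-eccentricity}: since $e(I)=e(I^c)$ and $I\in\mathfrak{C}\iff I^c\in\mathfrak{W}$, the central vertices of this type are precisely $X^c$. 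I then check the distinguished vertices by hand: $\mathfrak{a},\mathfrak{b}\in X$ and $\mathfrak{u},\mathfrak{v}\in X^c$ are central, whereas $\mathfrak{d}$ has eccentricity $4$ by Theorem~\ref{abduv-eccentricity}(ii) and indeed $\mathfrak{d}\notin X\cup X^c$. Hence the center is $X\cup X^c$.

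For parts (ii) and (iii) I fix $n_F(R)=2$ and use the standing convention $R\cong F_1\times R_2\times R_3$ with $R_3$ not a field. When $|{\rm Max}(R)|=3$ the factors $R_2,R_3$ are local, and since $n_F(R)=2$ with $R_3$ not a field, $R_2$ is forced to be a field; consequently no vertex can have a non-trivial second coordinate, while $R_3$ is not a field. Inspecting Theorems~\ref{c-eccentricity} and \ref{w-eccentricity} shows that, once $n_F(R)=2$, every eccentricity-$4$ outcome requires either a non-trivial second coordinate or $R_3$ a field, so all such outcomes are excluded and every vertex has eccentricity $3$, proving (ii). When $|{\rm Max}(R)|\geq 4$ I write $R\cong F_1\times T_2\times F_3\times T_4\times\cdots\times T_n$ and again argue coordinatewise: among the $F_1$-vertices, Theorem~\ref{c-eccentricity} yields eccentricity $4$ only in case (v)(b), namely for $I=F_1\times I_2\times(0)\times I_4\times\cdots\times I_n$ with all $I_i$ non-trivial; applying the complement symmetry (equivalently Theorem~\ref{w-eccentricity}(v)(b)) turns this into the second forbidden family $(0)\times I_2\times F_3\times I_4\times\cdots\times I_n$ among the $(0)$-vertices. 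Every other vertex, including $\mathfrak{d}$ (central because $n_F(R)=2$), has eccentricity $3$, which is the stated characterization.

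The only genuinely delicate point, and where I expect to spend most of the care, is the treatment of the boundary vertices $\mathfrak{a},\mathfrak{b},\mathfrak{u},\mathfrak{v},\mathfrak{d}$: these lie in $\mathfrak{A}$ rather than in $\mathfrak{C}\cup\mathfrak{W}$, so I must confirm directly that their memberships in $X$, in $X^c$, or in the two forbidden families are consistent with their eccentricities from Theorem~\ref{abduv-eccentricity}, and that the complement operation sends the $\mathfrak{C}$-description onto the $\mathfrak{W}$-description with neither double-counting nor omission.
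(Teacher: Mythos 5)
Your proposal is correct and matches the paper's approach exactly: the paper gives no written proof, stating the corollary as immediate from Theorems \ref{abduv-eccentricity}, \ref{c-eccentricity} and \ref{w-eccentricity}, which is precisely the case-scan you carry out (with the correct preliminary observation, via Theorem \ref{radius} and Lemma \ref{eccentricity3}, that a vertex is central if and only if its eccentricity is $3$). Your extra care with the boundary vertices $\mathfrak{a},\mathfrak{b},\mathfrak{u},\mathfrak{v},\mathfrak{d}$ and with the complement symmetry $e(I)=e(I^c)$ fills in details the paper leaves implicit, and all of your case assignments check out.
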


%%%%%%%%%%%%%%%%%%%%%%%%%%%%%%%%%%%%%%%%%%%%%%%%%%%%%%%%%%%%%%%%%%%%%%%%%%%%%%%%%%%%%%%%%%%%%%%%%%%%%%%%%%%%%%%%%%%%%%%%%%%%%%%%%%%%%%%%%%%%%%%%%%%%%%
%%%%%%%%%%%%%%%%%%%%%%%%%%%%%%%%%%%%%%%%%%%%%%%%%%%%%%%%%%%%%%%%%%%%%%%%%%%%%%%%%%%%%%%%%%%%%%%%%%%%%%%%%%%%%%%%%%%%%%%%%%%%%%%%%%%%%%%%%%%%%%%%%%%%%%
%\noindent{\bf Acknowledgements.} The first and the third authors
%are indebted to the School of Mathematics, Institute for Research in Fundamental Sciences,
%(IPM), for support. The research of the second author was in part
%supported by a grant from IPM (No. 88050212).
%%%%%%%%%%%%%%%%%%%%%%%%%%%%%%%%%%%%%%%%%%%%%%%%%%%%%%%%%%%%%%%%%%%%%%%%%%%%%%%%%%%%%%%%%%%%%%%%%%%%%%%%%%%%%%%%%%%%%%%%%%%%%%%%%%%%%%%%%%%%%%%%%%%%%%
%%%%%%%%%%%%%%%%%%%%%%%%%%%%%%%%%%%%%%%%%%%%%%%%%%%%%%%%%%%%%%%%%%%%%%%%%%%%%%%%%%%%%%%%%%%%%%%%%%%%%%%%%%%%%%%%%%%%%%%%%%%%%%%%%%%%%%%%%%%%%%%%%%%%%%

{}
%%%%%%%%%%%%%%%%%%%%%%%%%%%%%%%%%%%%%%%%%%%%%%%%%%%%%%%%%%%%%%%%%%%%%%%%%%%%%%%%%%%%%%%%%%%%%%%%%%%%%%%%%%%%%%%%%%%%%%%%%%%%%%%%%%%%%%%%%%%%
%%%%%%%%%%%%%%%%%%%%%%%%%%%%%%%%%%%%%%%%%%%%%%%%%%%%%%%%%%%%%%%%%%%%%%%%%%%%%%%%%%%%%%%%%%%%%%%%%%%%%%%%%%%%%%%%%%%%%%%%%%%%%%%%%%%%%%%%%%%%
%%%%%%%%%%%%%%%%%%%%%%%%%%%%%%%%%%%%%%%%%%%%%%%%%%%%%%%%%%%%%%%%%%%%%%%%%%%%%%%%%%%%%%%%%%%%%%%%%%%%%%%%%%%%%%%%%%%%%%%%%%%%%%%%%%%%%%%%%%%%%%%%%%%%%%%
%%%%%%%%%%%%%%%%%%%%%%%%%%%%%%%%%%%%%%%%%%%%%%%%%%%%%%%%%%%%%%%%%%%%%%%%%%%%%%%%%%%%%%%%%%%%%%%%%%%%%%%%%%%%%%%%%%%%%%%%%%%%%%%%%%%%%%%%%%%%%%%%%%%%%%%


\begin{thebibliography}{}{\small

%\bibitem{alg.colloq} G. Aalipour, S. Akbari, M. Behboodi, R. Nikandish, M.J. Nikmehr and F. Shaveisi,
%The classification of the annihilating-ideal graph of a commutative ring, Algebra Colloquium,  to appear.

\bibitem{disc.math} G. Aalipour, S. Akbari, R. Nikandish, M.J. Nikmehr and F. Shaveisi, On the coloring
of the annihilating-ideal graph of a commutative ring, {\it Discrete
Math.} {\bf 312} (2012), 2620--2626.

\bibitem{rocky} G. Aalipour,  S. Akbari,   R. Nikandish, M. J. Nikmehr and  F. Shaveisi,   Minimal
prime ideals and cycles in annihilating-ideal graphs, {\it Rocky Mountain
J. Math.} {\bf 43} (2013),  no. 5, 1415--1425.

\bibitem{khashayarmanesh} M. Afkhami, M. Karimi and K. Khashayarmanesh, On the regular digraph of ideals of a commutative ring, {\it Bull. Aust. Math. Soc.} \textbf{88} (2012), no. 2, 177--189.

%\bibitem{cayleyakhtar} R. Akhtar, M. Boggess, T.J. Henderson,  I. Jim\'{e}nez, R. Karpman, A. Kinzel and D. Pritikin, On the unitary cayley graph of a finite ring, {\it Electronic Journal of Combinatorics} {\bf 16} (2009), 1--13.

\bibitem{ati} M.F. Atiyah, I.G. Macdonald, {\it Introduction to
Commutative Algebra}, Addison-Wesley Publishing Company, 1969.

\bibitem{MBI}  M. Behboodi, Z. Rakeei, The annihilating ideal
graph of commutative rings I, {\it J. Algebra Appl.} {\bf 10} (2011), 727--739.

\bibitem{bondy} J.A. Bondy, U.S.R. Murty, {\it Graph Theory with Applications}, American Elsevier, New York, 1976.

\bibitem{intersection} I. Chakrabarty, S. Ghosh, T.K. Mukherjee and M.K. Sen,
{Intersection graphs of ideals of rings}, {\it Discrete Math.} {\bf 309} (2009), 5381--5392.

\bibitem{actamathhungar} M.J. Nikmehr and F. Shaveisi, The regular
digraph of ideals of a commutative ring, {\it Acta Math. Hungar.} {\bf 134} (2012), 516--528.

\bibitem{ShaveisiNikandish} F. Shaveisi and R. Nikandish, The nil-graph of ideals of a commutative ring, {\it Bull. Malays. Math. Sci. Soc. (2)}, to appear.
}\end{thebibliography}
\end{document}